\DeclareMathOperator{\PF}{PF}
\theoremstyle{plain}
\newtheorem{thm}{Theorem}[section]
\newtheorem{lem}[thm]{Lemma}
\newtheorem{prop}[thm]{Proposition}
\newtheorem{cor}[thm]{Corollary}
\newtheorem{remark}[thm]{Remark}
\theoremstyle{definition}
\newtheorem{problem}{Problem}
\numberwithin{case}{thm}
\numberwithin{subcase}{case}
\theoremstyle{definition}
\newtheorem{defn}[thm]{Definition}
\newtheorem{ex}[thm]{Example}
\newcommand{\Lucky}{\mathsf{Lucky}} % lucky set
\newcommand{\lucky}{\mathsf{lucky}} % number of lucky cars
\newcommand{\bump}[1]{#1^+}         % next larger value in a set
\newcommand{\nomax}[1]{#1^{\triangledown}}        % removing max value from a set
\newcommand{\sort}[1]{#1^{\uparrow}}
\newcommand{\disvec}{\mathsf{dis}}  % displacement vector of a parking function
\title{On the Lucky and Displacement Statistics of\\Stirling Permutations}
\thanks{The authors would like to thank the GEMS workshop and AIM for the opportunity to collaborate.}
\author[Colmenarejo]{Laura Colmenarejo}
\address[L.~Colmenarejo]{Department of Mathematics, North Carolina State University, Raleigh, NC 27685}
\email{\textcolor{blue}{\href{mailto:lcomen@ncsu.edu}{lcomen@ncsu.edu}}}
\author[Dawkins]{Aleyah Dawkins}
\address[A.~Dawkins]{Department of Mathematical Sciences, George Mason University, Fairfax, VA 22030}
\email{\textcolor{blue}{\href{mailto:adawkin@gmu.edu}{adawkin@gmu.edu}}}
\author[Elder]{Jennifer Elder}
\address[J.~Elder]{Department of Computer Science, Mathematics and Physics, Missouri Western State University, St. Joseph, MO 64507}
 \email{\textcolor{blue}{\href{mailto:jelder8@missouriwestern.edu}{jelder8@missouriwestern.edu}}}
\thanks{J.~Elder was partially supported by an AWM Mentoring Travel Grant}
\author[Harris]{Pamela E. Harris}
\address[P.~E.~Harris]{Department of Mathematical Sciences, University of Wisconsin-Milwaukee, Milwaukee, WI 53211}
\email{\textcolor{blue}{\href{mailto:peharris@uwm.edu}{peharris@uwm.edu}}}
\thanks{P.~E.~Harris was partially supported by a Karen Uhlenbeck EDGE Fellowship.
}
\author[Harry]{Kimberly J.~Harry}
\address[K.~J. ~Harry]{Department of Mathematical Sciences, University of Wisconsin-Milwaukee, Milwaukee, WI 53211}
\email{\textcolor{blue}{\href{mailto:kjharry@uwm.edu}{kjharry@uwm.edu }}}
\author[Kara]{Selvi Kara} 
\address[S.~Kara]{Department of Mathematics, Bryn Mawr College, Bryn Mawr, PA 19010}
\email{\textcolor{blue}{\href{mailto:skara@brynmawr.edu}{skara@brynmawr.edu}}}
\author[Smith]{Dorian Smith}
\address[D.~Smith]{School of Mathematics, University of Minnesota, Minneapolis, MN 55455}
\email{\textcolor{blue}{\href{mailto:smi01055@umn.edu}{smi01055@umn.edu}}}
\author[Tenner]{Bridget Eileen Tenner}
\address[B.~E.~Tenner]{Department of Mathematical Sciences, DePaul University, Chicago, IL 60614}
\email{\textcolor{blue}{\href{mailto:bridget@math.depaul.edu}{bridget@math.depaul.edu}}}
\thanks{B.~E.~Tenner was partially supported by the NSF grant DMS-2054436.}
\subjclass{05A05, 05A15}
\keywords{parking function, Stirling permutation,  lucky statistic, displacement, Catalan number}
\begin{document}
\maketitle

\begin{abstract}
Stirling permutations are parking functions, and we investigate two parking function statistics in the context of these objects: lucky cars and displacement. 
Among our results, we consider two extreme cases: extremely lucky Stirling permutations (those with maximally many lucky cars) and extremely unlucky Stirling permutations (those with exactly one lucky car). 
We show that the number of extremely lucky Stirling permutations of order $n$ is the Catalan number $C_n$, and the number of extremely unlucky Stirling permutations is $(n-1)!$.
We also give some results for luck that lies between these two extremes. 
Further, we establish that the displacement of any Stirling permutation of order $n$ is $n^2$, and we prove several results about displacement composition vectors. 
We conclude with directions for further study.
\end{abstract}

\section{Introduction}

Throughout the paper, we let $\mathbb{N}\coloneqq\{0,1,2,3,\ldots\}$, $[i,j] \coloneqq\{i,i+1,\ldots, j-1,j\}$ for integers $i \le j$, and $[n] \coloneqq [1,n]$.
A \textit{parking function of length} $n$ is a tuple $\alpha=(a_1,a_2,\ldots,a_n)\in [n]^n$ whose rearrangement into weakly increasing order 
$\sort{\alpha}=(b_1,b_2,\ldots,b_n)$ satisfies $b_i\leq i$ for all $i\in[n]$.
For example, $\alpha_{1}=(1,3,1,5,6,3)$ and $\alpha_{2}=(4,4,1,2,2,3,3,1)$ are parking functions since their rearrangements are $\sort{\alpha}_1=(1,1,3,3,5,6)$ and $\sort{\alpha}_2=(1,1,2,2,3,3,4,4)$, respectively.
However, $\alpha_3=(2,2,3,1,6,6)$ is not a parking function since $\sort{\alpha}_3=(1,2,2,3,6,6)$ and $b_5=6 \nleq 5$.
We let $\PF_n$ denote the set of parking functions of length~$n$. 

We recall another equivalent interpretation of parking functions. 
\begin{defn}
    Consider a one-way street with exactly $n$ parking spots and a line-up of $n$ cars, each with a preferred spot that we record as a sequence $\alpha=(a_1,a_2,\ldots,a_n)\in [n]^n$.
    Car $i$ tries to park in its preferred spot  $a_i$. 
    If spot $a_i$ is available, then car $i$ parks there; otherwise, car $i$ continues down the street until it finds the first available spot to park and parks there.
    If all cars park, $\alpha$ is a \emph{parking function}.
\end{defn}
Throughout the paper, we use the language from the above definition when referring to parking functions.
Parking functions were introduced in ~\cite{konheim1966occupancy} by Konheim and Weiss in their study of hashing functions, and they established that $|\PF_n|=(n+1)^{n-1}$.
Since their introduction, much work has been done to study this family of combinatorial objects, their statistics, and numerous generalizations~\cite{unit_pf, unit_perm, inv_assort, knaple, DIAZLOPEZ20191674, flat_pf, Schumacher_Descents_PF}.

One such work, motivating the current, is that of Gessel and Seo on so-called ``lucky'' cars in parking functions \cite{GesselandSeo}. 

\begin{defn}\label{defn:lucky}
In a parking function $\alpha = (a_1, a_2, \ldots, a_n)$, car $i$ is said to be \textit{lucky} if it parks in its preferred spot, $a_i$. 
Given $\alpha\in\PF_n$, write
$\Lucky(\alpha)$ for the set of lucky cars in $\alpha$, and write $\lucky(\alpha)\coloneqq |\Lucky(\alpha)|$
for the number of lucky cars in $\alpha$. 
\end{defn}

For example,  $\Lucky(\alpha_1)=\{1\}$ for the parking function $\alpha_1=(1,1,\ldots,1)\in[n]^n$ satisfies, which is one of the ``unluckiest'' parking functions since the only car that parks in its preferred spot is car $1$. The lucky cars of a different parking function $\alpha_2=(n,1,1,\ldots,1)\in[n]^n$
are $1$ and $2$; that is, $\Lucky(\alpha_2) = \{1,2\}$ making $\alpha_2$ luckier than $\alpha_1$. 
Permutations of $[n]$ are the ``luckiest'' parking functions,
as every car gets to park in their preference; that is, if $\alpha$ is a permutation of $[n]$, then $\Lucky(\alpha)=[n]$ and $\lucky(\alpha)=n$. 

Gessel and Seo gave the following generating function for the lucky car statistic on parking functions~\cite{GesselandSeo}:
\begin{equation}\label{eq:GesselSeo}
  \sum_{\alpha \in \PF_n} q^{\lucky(\alpha)} = q\prod_{i=1}^{n-1} (i+(n-i+1)q).  
\end{equation}

Their work is related to rooted trees, and the proof relies on ordered set partitions and generating function techniques. More recently, it was established by Harris, Kretschmann, and Mart\'inez Mori, that the sequences $\alpha \in [n]^n$
(not necessarily parking functions) with exactly $n-1$ cars parking in their preferred spot  
 is equal to the total number of comparisons performed by the Quicksort algorithm given all possible orderings of an array of size $n$~\cite{harris2023lucky}. 

We recall another related statistic for parking functions known as  \textit{displacement}.

\begin{defn}
 Let $\alpha=(a_1,\ldots, a_n) \in [n]^n$ be a parking function. If car $i$ parks in spot $p(i)\in [n]$, then the \textit{displacement} of car $i$ is defined by $d(I)= p(i)-a_i$, the difference between where it parks and its preference. The \textit{total displacement} of $\alpha$ is given by  $d(\alpha)=\sum_i d(I)$.   In this way, cars with zero displacement are lucky cars, while any car whose displacement is positive is unlucky.
\end{defn}

Displacement has been widely studied in the literature.  Knuth established a bijection between the total displacement of parking functions and the number of inversions of labeled trees \cite{Knuth1998}. Beissinger and Peled constructed a bijection from parking functions to labeled trees that carries the displacement of parking functions to the external activity of trees \cite{Beissinger1997}. 
Yan studied recurrence relations for generalized displacement in $u$-parking functions \cite{ MR3408702}, and Aguillon, Alvarenga, Harris, Kotapati, Mori, Monroe, Saylor, Tieu, Williams found a bijection with the set of ideal states in the  Tower of Hanoi game and parking functions of length $n$ with displacement equal to one \cite{displacement_hanoi}. For more on the lucky statistic and displacement, we point the reader to the 2023 work of Stanley and Yin \cite{stanley2023enumerative}.

In this paper, we focus on a subset of parking functions consisting of Stirling permutations. 

\begin{defn}\label{defn:stirling permutations}
A permutation of  the multiset
$\{1,1,2,2,3,3,\ldots,n,n\}$ is a \textit{Stirling permutation of order $n$} 
if any value $j$ appearing between the two instances of $i$ satisfies $j>i$.
We write $Q_n$ to denote the set of Stirling permutations of order $n$, 
and we write $w \in Q_n$ as a word $w(1) w(2) \cdots w(2n)$. 
\end{defn}

For example, $123321 \in Q_3$, but $123231 \notin Q_3$ as 2 appears between the two instances of 3.

Gessel and Stanley studied Stirling permutations in {\cite{MR462961}}.  
The Stirling polynomial $Q_n(t)=\sum_{w\in Q_n}t^{des(w)}$, where $des(w)$ refers to the descent statistic in permutations, arises as the numerator for the generating function
\[\sum_{m\geq 0}S(m+n,n)t^m=\frac{Q_n(t)}{(1-t)^{2n+1}},\]
where $S(m+n,n)$ are the Stirling numbers of the second kind \cite[\href{https://oeis.org/A008277}{A008277}]{OEIS}. 
The Stirling numbers of the second kind $S(n,k)$ count the number of set partitions of $[n]$ consisting of exactly $k$ blocks~\cite{StanleyECVol1}.
In \cite{MR462961}, Gessel and Stanley showed $|Q_n|=(2n-1)!!$.
Since then, others have studied Stirling permutations to include work on enumerating descents and mesa sets, and other statistics related to generalized permutations~\cite{elizalde2022descents, ELIZALDE2021105429, MesaPaper, shankar2023enumeration}.

Stirling permutations of order $n$ can be interpreted as parking functions of length $2n$.
Given a Stirling permutation $w=w(1)w(2)\cdots w(2n)\in Q_n$, the weakly increasing rearrangement of $w$ is always $(1,1,2,2,3,3,\ldots, n,n)$, which satisfies the required inequality condition characterizing parking functions. Thus, we have $ w\in  \PF_{2n}$.
Henceforth, we abuse notation and drop parenthesis and commas when talking about parking functions, and instead adopt the one-line notation used in Stirling permutations.

Since Stirling permutations can be viewed as a subset of parking functions, it is natural to explore the behavior of parking function statistics within this subset. Our focus is on the displacement and lucky statistics of Stirling permutations. Furthermore, we investigate their admissible sets in the context of lucky statistics. Given a subset $S\subset[2n]$, we say $S$ is an \emph{admissible lucky set} (or just \emph{admissible}) if there exists $w\in Q_n$ such that $\Lucky(w)=S$. Otherwise, we say that $S$ is \emph{not admissible}.

Our paper is organized in the following way.
In Section~\ref{sec:extreme cases}, we consider two extreme cases  and prove the following:
\begin{itemize}[leftmargin=.3in]
    \item The so-called ``extremely unlucky'' Stirling permutations (those with exactly one lucky car) are enumerated by $(n-1)!$ (Theorem~\ref{thm:extremely unlucky}).
    \item The so-called ``extremely lucky'' Stirling permutations (those with exactly $n$ lucky cars) are enumerated by Catalan numbers (Theorem~\ref{thm:extremely lucky bijection with parentheses}).
\end{itemize}

In Sections~\ref{sec:admissible} and~\ref{sec:small cardinality}, we look between these extremes by considering possible admissible sets of Stirling permutations.
As a foray into this broad topic, we establish the following results:
\begin{itemize}[leftmargin=.3in]
\item If $S$ is an admissible lucky set for $Q_n$, then it is also admissible for all $Q_{m}$ with $m>n$. (Theorem~\ref{thm:n to n+1 admissible})
\item We provide conditions under which there are exactly two lucky cars ({Theorem}~\ref{thm:exactly two lucky cars}).
The set $\{1, n-1, 2n-2\}$ is admissible if and only if $n$ is even (Proposition~\ref{prop:admissible n even}).
\end{itemize}
In Section~\ref{sec:displacement}, we consider the displacement statistic 
of Stirling permutations and  prove the following results: 

\begin{itemize}[leftmargin=.3in]
    \item The displacement of any Stirling permutation is $n^2$ (Corollary~\ref{cor:displacement}).
    \item For any Stirling permutation, the number of cars displaced is between (inclusively) $n$ and $2n-1$ (Corollary~\ref{cor:bounds for m_D}).
    \item We show that extremely lucky  Stirling permutations are uniquely determined by their displacement composition, a tuple that describes the displacement of the unlucky cars (Theorem~\ref{thm:displacement_implies_unique} and Corollary~\ref{cor:displace_num}). 
\end{itemize}

We conclude the paper in Section~\ref{sec:open problems} with a selection of open problems.

\section{Extremely unlucky/lucky Stirling permutations}\label{sec:extreme cases}

The $\lucky$ statistic of Stirling permutations in $Q_n$ has a well-defined range, as we show in the upcoming Lemma~\ref{lem:lucky_range}. It is natural  
to look at the  
Stirling permutations that achieve the extreme values in this range. The main results of this section are characterizations and enumerations of those two families of Stirling permutations -- one enumerated by $(n-1)!$, and the other enumerated by the $n$th Catalan number.

\begin{lem}\label{lem:lucky_range}
  Let  $w = w(1) \cdots w(2n) \in Q_n$.   
    \begin{enumerate}
    \item[(a)]  Car 1 is always lucky. 
    \item[(b)]  If car $i$ is the first car with $w(i) = 1$, then $i \in  \Lucky(w)$.
      \item[(c)] $1 \leq \lucky(w)  \leq n$. 
    \end{enumerate}
 \end{lem}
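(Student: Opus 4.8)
My plan is to prove the three parts in order, using the one-way street / parking process model. For part (a), I would observe that car $1$ is the first car to attempt to park, so every spot is empty when it arrives; in particular its preferred spot $w(1)$ is available, so car $1$ parks there and is lucky by definition. This is essentially immediate.

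For part (b), suppose car $i$ is the \emph{first} car (reading left to right) with $w(i) = 1$. I would argue that when car $i$ arrives, spot $1$ is still empty. Indeed, the only way a car $j < i$ could occupy spot $1$ is by preferring spot $1$ (since a car never backs up, and spot $1$ is the leftmost spot, a car can only reach spot $1$ by preferring it); but by the minimality of $i$, no car $j<i$ has $w(j) = 1$. Hence spot $1$ is available when car $i$ arrives, so car $i$ parks in its preferred spot $w(i) = 1$ and is therefore lucky.

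For part (c), the lower bound $\lucky(w) \geq 1$ is immediate from part (a). For the upper bound $\lucky(w) \leq n$, the key point is that each value in $\{1,2,\ldots,n\}$ appears exactly twice in $w$, and for any value $k$, \emph{at most one} of the two cars preferring spot $k$ can be lucky: once the first such car parks in spot $k$ (if it is lucky), the spot is occupied, so when the second car preferring $k$ arrives it cannot park in spot $k$ and is unlucky. Thus the lucky cars inject into the set of distinct preference values $\{1,\ldots,n\}$, giving $\lucky(w) \leq n$. I would phrase this as: partition the $2n$ cars into $n$ pairs according to their common preference; each pair contributes at most one lucky car.

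The main (very mild) obstacle is just being careful in part (b) with the claim that a car can occupy spot $1$ only if it prefers spot $1$ — this uses the structural fact that spot $1$ is the first spot on the one-way street and cars only move forward, so no overflow from an earlier spot can land in spot $1$. Everything else is a direct unwinding of definitions, and no Stirling-permutation-specific structure beyond "each value appears exactly twice" is needed; indeed parts (a) and (c)'s upper bound hold for any parking function whose sorted form is $(1,1,2,2,\ldots,n,n)$.
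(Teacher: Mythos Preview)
Your proposal is correct and follows essentially the same approach as the paper. Parts (a) and (b) are identical in spirit (you merely spell out in (b) why no earlier car can have landed in spot~$1$, which the paper leaves implicit), and in (c) your ``at most one lucky car per preference pair'' is just a rephrasing of the paper's observation that lucky cars must park in spots $1,\ldots,n$ and hence number at most $n$.
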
   

\begin{proof}
(a) Car $1$ is always lucky in each $ w\in Q_n$, as it is the first car to park when all the spots (including its desired spot) are available. Thus, $1\in \Lucky(w)$ for  every $w\in Q_n$.

(b) Let $i\in[n]$ be the smallest $i$ such that $w(i)=1$, i.e. car $i$ is the first car to prefer spot $1$. Then car $i$ parks in spot $1$.  Hence, $i \in \Lucky(w)$. 

(c) It follows from (a) that $ 1\leq \lucky(w)$ for every $w\in Q_n$. For the upper bound, note that each car prefers one of the first $n$ parking spots by the definition of Stirling permutations. Thus, at most $n$ cars get to park in their desired spots, meaning $ \lucky(w) \leq n$.
\end{proof}

In the context of unlucky cars, the following complementary observation is immediate yet useful.

\begin{remark}\label{rem:2n never lucky}
Any car whose preference is the second instance of a value in $w\in Q_n$ is unlucky. In particular, the last car $2n$ is always unlucky. 
\end{remark}
 
The main results of the next two subsections consider the extremes described by Lemmas~\ref{lem:lucky_range}(c). Namely, we establish that the number of ``extremely unlucky'' Stirling permutations (those with only one lucky car) is $(n-1)!$ (see Theorem~\ref{thm:extremely unlucky}) and that ``extremely lucky'' Stirling permutations (those with $n$ lucky cars) are counted by Catalan numbers (see Corollary~\ref{cor:extremely lucky are catalan}).

\subsection{Stirling permutations with extremely bad luck}

The lower bound in Lemma~\ref{lem:lucky_range}(c) suggests a sort of ``extremely bad luck'' for Stirling permutations: having exactly one lucky car.

\begin{defn}\label{defn:unlucky stirling}
    A Stirling permutation $w$ with $\Lucky(w) = \{1\}$ is an \emph{extremely unlucky} (Stirling) permutation.
\end{defn}

One necessary condition for having exactly one lucky car is that $w(1) = 1$. Similarly, we must also have $w(2) = 1$, since otherwise $w(2)$ would be lucky.

\begin{ex}
    There are two extremely unlucky Stirling permutations of order $3$: $112233$ and $112332$.
\end{ex}

We now prove a technical result that helps us characterize extremely unlucky Stirling permutations.

\begin{lem}\label{lem:early small values ruin luck}
    Let $w=w(1)w(2)\cdots w(2n) \in Q_n$ and $x \in [2n]$.
    If $w(i) < x$ for all $i \le x$, then $x \not\in \Lucky(w)$.
\end{lem}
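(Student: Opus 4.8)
The plan is to run the parking process of $w$, viewed as a parking function of length $2n$, and to show directly that when car $x$ pulls up, its preferred spot $w(x)$ is already occupied. The only feature of the hypothesis I would use is that $w(i) < x$ forces $w(i) \le x-1$, so cars $1, 2, \ldots, x$ all prefer spots lying in $\{1, 2, \ldots, x-1\}$; in particular $w(x) \le x-1$. The whole argument then reduces to a single occupancy claim: after cars $1, 2, \ldots, x-1$ have parked, spot $w(x)$ is occupied. Granting this, car $x$ finds $w(x)$ full, must drive past it, and hence cannot park in $w(x)$, so $x \notin \Lucky(w)$.

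To prove the occupancy claim I would invoke the standard characterization that, after a fixed set of cars has parked, spots $1, \ldots, s$ are all occupied precisely when, for every $s' \le s$, at least $s'$ of those cars prefer a spot $\le s'$. Applying this with $s = w(x)$ and the set of cars $\{1, \ldots, x-1\}$, it suffices to check that for each $s' \le w(x)$ at least $s'$ of the positions $1, \ldots, x-1$ carry a value $\le s'$. This is the step where the Stirling structure has to be exploited rather than just the bound $w(i) \le x-1$: since $w$ is a Stirling permutation and all of $w(1), \ldots, w(x)$ lie in $\{1, \ldots, x-1\}$, the requirement that any value sandwiched between the two copies of $i$ be strictly larger than $i$ tightly constrains how the small values $1, 2, \ldots, w(x)$ and their second copies are distributed among the first $x$ positions, and I would convert that into the needed counting bound. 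I would most likely do this by induction on $x$ (or on $w(x)$), peeling off the block governed by the value $1$, then $2$, and so on, with Lemma~\ref{lem:lucky_range} furnishing the base cases.

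The step I expect to be the real obstacle is exactly this occupancy count. A crude pigeonhole on the multiset $\{w(1), \ldots, w(x-1)\}$ is not enough: although those $x-1$ cars all prefer spots in a window of size $x-1$, they can overflow past spot $x-1$ and leave a lower spot empty, so the size bound alone does not force $w(x)$ to be filled. It is the Stirling constraints — controlling where the second occurrences of the small values sit relative to position $x$ — that must supply the extra leverage, and making that bookkeeping rigorous (rather than merely heuristically convincing) is where the effort lies.
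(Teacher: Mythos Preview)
Your instinct that the occupancy count is the real obstacle is exactly right --- so right, in fact, that the count cannot be completed: the lemma as literally stated is false. Take $w = 221331 \in Q_3$ and $x = 3$. Then $w(1) = w(2) = 2$ and $w(3) = 1$ are all less than $3$, so the hypothesis holds; yet car $3$ prefers spot $1$, which is still empty after cars $1$ and $2$ have parked in spots $2$ and $3$, so car $3$ is lucky and $3 \in \Lucky(w)$. (The paper's own Table~\ref{tab:displacement comps} records $\disvec(221331) = (0,1,0,1,2,5)$, confirming this.) The Stirling structure you hoped would supply the missing leverage does not: the prefix $221$ is a perfectly legitimate beginning of a Stirling word, and it leaves spot $1$ open for car $3$. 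So your plan, aimed faithfully at the stated claim about car $x$, was chasing an assertion that simply is not true.

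What the paper's argument actually establishes is a slightly different (and correct) statement: under the same hypothesis, \emph{spot} $x$ is filled by one of the first $x$ cars --- necessarily an unlucky one, since they all prefer spots in $[1,x-1]$ --- so that no car whose preference is $x$ can be lucky. That really is pure pigeonhole: if spot $x$ were still empty after the first $x$ cars park, none of them could have driven past it, forcing all $x$ of them into the $x-1$ spots of $[1,x-1]$. The paper's final line ``Hence $x \notin \Lucky(w)$'' silently conflates the car index $x$ with the car $j$ satisfying $w(j)=x$; the conclusion that is genuinely proved is that this car $j$ is unlucky. Fortunately this spot-level version is exactly what Corollary~\ref{cor:extremely unlucky iff} needs (ranging over $x \in [2,n]$ forces every spot except spot $1$ to be occupied by an unlucky car, hence $\lucky(w)=1$), so the downstream results survive. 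The difference from your approach is not one of technique but of target: the paper is, in effect, proving a statement about spots, and for that no Stirling-specific bookkeeping is required.
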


\begin{proof}
    Assume that $w(i) < x$ for all $i \le x$, for some $x \ge 2$ (the statement is meaningless for $x < 2$). Thus the first $x$ cars all want to park in the first $x-1$ spaces. By the pigeonhole principle, at least one car is unlucky and has to take the next available space. Consequently, one of these initial $x$ cars parks in spot $x$. By the assumption, the first appearance of $w(j) = x$ occurs for $j > x$. Thus, car $j$ does not get to park in its preferred spot because spot $x$ has already been claimed. Hence $x \not\in \Lucky(w)$.
\end{proof}

\begin{ex}
    Let $w=122133 \in Q_3$. Consider, first, $x=3$, and note that $\{w(1), w(2), w(3)\} = \{1,2\}$. Indeed, $3 \not\in \Lucky(w)$, confirming the result of Lemma~\ref{lem:early small values ruin luck}. Now consider $x = 2$. Given $\{w(1),w(2)\} = \{1,2\}$, the assumption of Lemma~\ref{lem:early small values ruin luck} does not hold.  This still leaves open the possibility for car 2 to be unlucky. However, car 2 parks in its preferred spot 2, indicating that it is not unlucky after all. 
\end{ex}

In other words, if the first $x$ values of $w$ are all less than $x$, then $x$ is not lucky.
In fact, as the above example suggests, this is a necessary and sufficient condition to characterize the extremely unlucky Stirling permutations.
\begin{cor}\label{cor:extremely unlucky iff}
    A Stirling permutation $w \in Q_n$ is extremely unlucky if and only if for all $x \in [2,n]$, we have $w(i) < x$ for all $i \le x$.
\end{cor}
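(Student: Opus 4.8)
The plan is to treat the two implications separately. The ``if'' direction (the stated condition implies that $w$ is extremely unlucky) is a quick consequence of Lemma~\ref{lem:early small values ruin luck}. Assuming that $w(i)<x$ for all $i\le x$ and all $x\in[2,n]$, Lemma~\ref{lem:early small values ruin luck} gives $x\notin\Lucky(w)$ for every $x\in[2,n]$ directly. For the remaining indices $x\in[n+1,2n]$ there is nothing extra to check: every entry of a Stirling permutation of order $n$ lies in $[n]$, so $w(i)\le n<x$ automatically holds for all $i\le x$, and Lemma~\ref{lem:early small values ruin luck} again forces $x\notin\Lucky(w)$. Since $1\in\Lucky(w)$ by Lemma~\ref{lem:lucky_range}(a), we conclude $\Lucky(w)=\{1\}$.

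For the ``only if'' direction I would argue by contrapositive: assuming the stated condition fails, I exhibit a lucky car other than car~$1$. Let $x\in[2,n]$ be the \emph{least} index for which some $i\le x$ has $w(i)\ge x$ (the case $x=2$ is exceptional and handled separately below, so assume $x\ge 3$). The crux is to show that, since no violation occurs for any $x'\in[2,x-1]$, the first $x-1$ cars occupy exactly the spots $1,2,\dots,x-1$. I would prove this by induction on $j=1,\dots,x-1$: the (non-violated) case $x'=2$ forces $w(1)=w(2)=1$, which covers the base; and if cars $1,\dots,j$ occupy $\{1,\dots,j\}$, then the (non-violated) case $x'=j+1$, legitimate as long as $j+1\le x-1$, gives $w(j+1)\le j$, so car $j+1$ finds its preferred spot taken and parks in spot $j+1$. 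Granting this, each position $i\le x-1$ satisfies $w(i)<x$ (use non-violation at $x'=i$, together with $w(1)=1$), so the violating index is forced to be $i=x$ itself, say $w(x)=y\ge x$. When car $x$ arrives the occupied spots are precisely $\{1,\dots,x-1\}$, so spot $y$ is empty, car $x$ parks in its preference, and car~$x$ is lucky; as $x\ge 2$, this gives $\Lucky(w)\ne\{1\}$.

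I expect the inductive claim---that the first $x-1$ cars fill an initial segment of parking spots---to be the main obstacle; once it is established, everything else is bookkeeping with Lemmas~\ref{lem:lucky_range} and~\ref{lem:early small values ruin luck}. One small boundary case must be dispatched by hand: when $x=2$ the induction is vacuous and does not deliver $w(1)=1$, so one instead checks directly that $w(1)\ge 2$ makes the first car preferring spot~$1$ a second lucky car (Lemma~\ref{lem:lucky_range}(b)), while $w(1)=1$ with $w(2)\ge 2$ makes car~$2$ lucky.
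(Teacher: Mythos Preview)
Your argument is correct and follows essentially the same route as the paper's proof: the ``if'' direction is dispatched via Lemma~\ref{lem:early small values ruin luck}, and the ``only if'' direction uses induction (your minimality of $x$ is the paper's inductive hypothesis) to show that the first $x-1$ cars occupy spots $1,\dots,x-1$, whence $w(x)\ge x$ would make car $x$ lucky. Your treatment is in fact a bit more careful than the paper's in two places: you explicitly cover cars $n+1,\dots,2n$ in the ``if'' direction, and you spell out the $x=2$ boundary case and the reason the violating index must be $i=x$.
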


\begin{proof}
Let $w \in Q_n$ be a Stirling permutation. If for all $x \in [2,n]$, we have $w(i) < x$ for all $i \le x$, then $w$ is extremely unlucky by Lemma~\ref{lem:early small values ruin luck}. 

Now, suppose that $w$ is extremely unlucky. Recall that $2 \not\in \Lucky(w)$ and it means that $w(2) = 1$, giving the result for $x = 2$. Fix $x \in [3,n]$ and suppose, inductively, that for all $y \in [2,x-1]$, we have $w(i) < y$ for all $i \le y$. Suppose, for the purpose of obtaining a contradiction, that $w(x)\ge x$. For $w$ to be extremely unlucky, for each $y \in [2,x-1]$, it must be the case that the first $y$ cars have parked in spots $[1,y]$. Thus the first $x-1$ cars have parked in spots $[1,x-1]$, meaning that spot $w(x)$ is available for car $x$ to park in. Thus, car $x$ is lucky, which contradicts the assumption that $w$ is extremely unlucky.
\end{proof}

The following set is useful in our proof of the enumeration of the extremely unlucky Stirling permutations. 

\begin{defn}
    For a finite, nonempty set $S$ of integer numbers, we define $\nomax{S}$ to be the subset of $S$ consisting of everything but the largest element of $S$; that is,
    $$\nomax{S} \coloneqq S \setminus \{\max(S)\}.$$
\end{defn}

Next, we enumerate extremely unlucky Stirling permutations via a constructive proof in which we build extremely unlucky Stirling permutations of order $n$ by first positioning the two copies of $n$, then the two copies of $n-1$, and so on, always ensuring that the only lucky car is the first car whose preferred spot is $1$. 
The construction is demonstrated after the proof of the theorem, in Example~\ref{ex:extremely unlucky}.

\begin{thm}\label{thm:extremely unlucky}
    For any integer $n \ge 1$, there are $(n-1)!$ extremely unlucky Stirling permutations of order $n$.
\end{thm}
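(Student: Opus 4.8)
The plan is to give a constructive bijection between extremely unlucky Stirling permutations of order $n$ and a set of size $(n-1)!$, built up by inserting the pairs $nn$, then $(n-1)(n-1)$, and so on down to $11$, one pair at a time.

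First I would set up the recursive skeleton. By Corollary~\ref{cor:extremely unlucky iff}, a Stirling permutation $w\in Q_n$ is extremely unlucky precisely when, for every $x\in[2,n]$, all of $w(1),\dots,w(x)$ are strictly less than $x$. The idea is to build $w$ by first deciding where the two copies of the largest value go, then the next largest, and so on. Concretely, suppose we have already placed the pairs $nn,(n-1)(n-1),\dots,(k+1)(k+1)$; the positions not yet filled form a word-in-progress whose ultimate prefix condition from Corollary~\ref{cor:extremely unlucky iff} constrains where the pair $kk$ may be inserted. I would make precise the claim that, at the stage of inserting the pair $kk$ into a partially built admissible word, the Stirling condition (value $j$ between the two copies of $k$ forces $j>k$, which at this stage means $j$ must be one of the already-placed larger values) together with the extremely-unlucky prefix condition leaves exactly $k-1$ valid placements — for $k\geq 2$ — while the final pair $11$ is forced into the first two positions (since $w(2)=1$ is required). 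Multiplying the number of choices gives $\prod_{k=2}^{n}(k-1) = (n-1)!$.

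The key steps in order: (1) observe that the pair $11$ occupies positions $1$ and $2$ in every extremely unlucky permutation, so it suffices to count the ways to arrange $22,33,\dots,nn$ in the remaining $2n-2$ positions subject to the constraints; (2) argue that placing the pairs in decreasing order of value is well-defined — that each admissible $w$ is obtained exactly once this way — by noting that the relative order of the two copies of any value, and their position relative to larger values, is determined by reading off $w$; (3) prove the counting lemma: given any legal partial arrangement of $nn,\dots,(k+1)(k+1)$ extended by the forced block for values $<k$, there are exactly $k-1$ ways to insert $kk$ so that the extremely-unlucky condition can still be completed; this is where the use of $\nomax{S}$ presumably enters, tracking which slots remain available as the "largest element" gets consumed. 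Then (4) conclude by the product rule, and (5) verify the base case $n=1$ (the single permutation $11$, and $0! = 1$).

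The main obstacle will be step (3): pinning down exactly which insertion positions for the pair $kk$ keep the partial word extendable to a full extremely unlucky permutation, and showing this count is exactly $k-1$ independent of the earlier choices. The Stirling condition says nothing can sit strictly between the two $k$'s except values exceeding $k$, so at insertion time the two copies of $k$ must either be adjacent or sandwich only already-placed large values; simultaneously, Corollary~\ref{cor:extremely unlucky iff} forbids $k$ (and the larger values) from appearing too early. I expect the cleanest route is to track, after placing $nn,\dots,(k+1)(k+1)$, the $k-1$ "gaps" into which the first copy of $k$ can legally go (roughly, positions $2$ through $k$ of the eventual word, which are exactly the slots not yet pinned down by larger values), with the second copy of $k$ then placed to satisfy the Stirling condition — and to argue this correspondence is a bijection by exhibiting the inverse (strip off the pair of $k$'s of largest value, record which gap it came from). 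Once that lemma is stated and proved carefully, the enumeration $(n-1)! = \prod_{k=2}^n (k-1)$ follows immediately, and Example~\ref{ex:extremely unlucky} can illustrate the construction.
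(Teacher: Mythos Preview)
Your approach is essentially the paper's own: use Corollary~\ref{cor:extremely unlucky iff} to characterize extremely unlucky permutations, then construct them by inserting the pairs $nn,(n-1)(n-1),\ldots,11$ in decreasing order, showing there are $k-1$ valid placements for the pair $kk$ and concluding by the product rule. One small slip: the valid positions for the leftmost copy of $k$ are not ``positions $2$ through $k$'' but rather the unoccupied positions in $[k+1,2n]$ excluding the rightmost such (this is exactly where $\nomax{S}$ enters); once you fix that, your sketch matches the paper's argument.
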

\begin{proof}
    Following Corollary~\ref{cor:extremely unlucky iff}, we can construct the extremely unlucky Stirling permutations of order $n$ as follows. 
    First, we position two (necessarily consecutive) copies of $n$, with the leftmost of those appearing as $w(i_n)$ for some $i_n \in \nomax{[n+1, 2n]}$.  There are $2n-(n+1) + 1 - 1 = n-1$ ways to do this. The rightmost $n$ necessarily appears at $w(\bump{i_n})$, where $\bump{i_n}$ is the smallest element larger than $i_n$ in the set $[n+1,2n]$; that is, $\bump{i_n} = i_n+1$.
    
    Next, we position the two copies of $n-1$. Because we are building a Stirling permutation, these are either consecutive or they immediately surround the two copies of $n$. Thus, it follows from Corollary~\ref{cor:extremely unlucky iff} that the leftmost of these appears as $w(i_{n-1})$ for some
    $$i_{n-1} \in \nomax{\Big([n, 2n] \ \setminus \ \{i_n, \bump{i_n}\}\Big)},$$
    and the rightmost $n-1$ necessarily have position $w(\bump{i_{n-1}})$, where $\bump{i_{n-1}}$ is the smallest element larger than $i_{n-1}$ in the set $[n, 2n] \setminus \{i_n, \bump{i_n}\}$. There are $2n - n + 1 - 2 - 1 = n-2$ choices for $i_{n-1}$. 
    
    We continue this process until we find two available positions for the leftmost $3$, one available position for the leftmost $2$, and finally insert the two $1$s as $w(1)$ and $w(2)$. For example, the leftmost $x$ appears as $w(i_x)$ for some
    $$i_x \in \nomax{\Big([x+1,2n] \ \setminus \ \{i_n,\bump{i_n}\} \ \setminus \ \{i_{n-1},\bump{i_{n-1}}\} \ \setminus \ \cdots \ \setminus \ \{i_{x+1},\bump{i_{x+1}}\}\Big)},$$
and thus there are $2n-(x+1)+1 - 2(n-x) - 1 = x-1$ choices for $i_x$. The second appearance of $x$ is at $w(\bump{i_x})$, where $\bump{i_x}$ is the smallest element larger than $i_x$ in the set
    $$[x+1,2n] \ \setminus \ \{i_n,\bump{i_n}\} \ \setminus \ \{i_{n-1},\bump{i_{n-1}}\} \ \setminus \ \cdots \ \setminus \ \{i_{x+1},\bump{i_{x+1}}\}.$$
     
    This process has constructed $(n-1)(n-2)\cdots 3\cdot 2 \cdot 1 = (n-1)!$ distinct Stirling permutations, which are exactly the extremely unlucky Stirling permutations of order $n$.
\end{proof}

We demonstrate the construction in the proof of Theorem~\ref{thm:extremely unlucky} with an example.

\begin{ex}\label{ex:extremely unlucky}
    Let $n = 6$.
    \begin{itemize}[leftmargin=.3in]
        \item The index $i_6$ can be any of the five values in $\nomax{[7,12]} = \{7,8,9,10,11\}$. Suppose $i_6 = 9$, and so $\bump{i_6} = 10$.
        \item The index $i_5$ can be any of the four values in $\nomax{\big([6,12] \, \setminus \, \{9,10\}\big)} = \{6,7,8,11\}$. Suppose $i_5 = 6$, and so $\bump{i_5} = 7$.
        \item The index $i_4$ can be any of the three values in $\nomax{\big([5,12] \, \setminus \, \{9,10\} \, \setminus \, \{6,7\}\big)} = \{5,8,11\}$. Suppose $i_4 = 8$, and so $\bump{i_4} = 11$.
        \item The index $i_3$ can be any of the two values in $\nomax{\big([4,12] \, \setminus \, \{9,10\} \, \setminus \, \{6,7\} \, \setminus \, \{8,11\}\big)} = \{4,5\}$. Suppose $i_3 = 5$, and so $\bump{i_3} = 12$.
        \item The index $i_2$ must be the one value in $\nomax{\big([3,12] \, \setminus \, \{9,10\} \, \setminus \, \{6,7\} \, \setminus \, \{8,11\} \, \setminus \, \{5,12\}\big)} = \{3\}$. 
        Thus we must have $i_2=3$ (in fact, $i_2$ is always 3),
        and hence $\bump{i_2} = 4$.
    \end{itemize}
    Thus, we have produced the Stirling permutation
    $$1 \ 1 \ 2 \ 2 \ 3 \ 5 \ 5 \ 4 \ 6 \ 6 \ 4 \ 3,$$
    which is, indeed, extremely unlucky.
\end{ex}

\subsection{Stirling permutations with extremely good luck}

In contrast to the extremely unlucky Stirling permutations of Theorem~\ref{thm:extremely unlucky}, we now consider the very fortunate ones. 
To begin, note that by Lemma~\ref{lem:lucky_range}(c), if $w  \in Q_n$, then $\Lucky(w) \subseteq [2n]$ with $\lucky(w)\leq n$. 
In other words, at most half of the cars are lucky.

\begin{defn}\label{defn:lucky stirling}
    A Stirling permutation $w \in Q_n$ for which $\lucky(w) =n$ is an \emph{extremely lucky} (Stirling) permutation.
\end{defn}

Note, in particular, that $w \in Q_n$ is extremely lucky if and only if lucky cars park in each of the first $n$ available parking spots.
As we will see, extremely lucky Stirling permutations are \emph{Catalan objects}; that is, they are enumerated by the Catalan numbers \cite[\href{https://oeis.org/A000108}{A000108}]{OEIS}.
\begin{defn}\label{defn:catalan}
    The \emph{Catalan numbers} are the sequence $\{C_0, C_1, \ldots\}$ defined by $C_n=\frac{1}{n+1}\binom{2n}{n}$. 
\end{defn}

Many families of combinatorial objects are Catalan objects, and we refer the interested reader to~\cite{stanley_2015} for a comprehensive book on the subject.
Our proof that extremely lucky permutations are another family of Catalan objects is established by giving a bijection to valid parenthesizations of $n$ pairs of parentheses; that is, expressions that contain $n$ pairs of parentheses that are correctly matched. 
It is well-known that valid parenthesizations are Catalan objects~\cite{StanleyECVol2}, a result which was proven by  Eug\`{e}ne Charles Catalan, although the sequence was known much earlier by Mingantu, a Mongolian mathematician~\cite{Larcombe}. 
In preparation for that main enumerative result, we first characterize extremely lucky Stirling permutations.

\begin{prop}\label{prop:extremely lucky second cars in order}
    A Stirling permutation $w \in Q_n$ is extremely lucky if and only if the subword of $w(1)\cdots w(2n)$ consisting of the second appearances of each number is the word $n\cdots 321$; that is, the second appearances of each number in $w$ occur in decreasing order.
\end{prop}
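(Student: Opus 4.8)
The plan is to prove both directions by tracking which spots are occupied as cars park, using the Stirling structure together with the earlier lemmas. For the forward direction, suppose $w \in Q_n$ is extremely lucky, so by the remark following Definition~\ref{defn:lucky stirling} the lucky cars occupy exactly the first $n$ spots $1, 2, \ldots, n$, and the unlucky cars occupy spots $n+1, \ldots, 2n$. The key observation is that since every value $j \in [n]$ appears exactly twice and only values in $[n]$ are preferences, and the $n$ lucky cars occupy spots $1,\ldots,n$ bijectively, the lucky cars must be precisely the first occurrences of $1, 2, \ldots, n$ (the first occurrence of $j$, call it car with preference $j$, parks in spot $j$ — one must check this carefully using Lemma~\ref{lem:early small values ruin luck} or a direct induction), while the unlucky cars are precisely the second occurrences. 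Then I would argue that a second occurrence of $j$, being unlucky, parks in the first free spot at the moment it arrives, which is some spot in $[n+1,2n]$; and I claim these second occurrences park in the spots $n+1, n+2, \ldots, 2n$ in the order they appear in $w$. Now suppose toward a contradiction that the second appearances do not occur in decreasing order: then there exist $i < j$ in $[n]$ such that the second copy of $i$ appears before the second copy of $j$ in $w$. By the Stirling condition, the second copy of $i$ appearing before the second copy of $j$ forces the first copy of $j$ to lie outside the interval between the two copies of $i$ — actually the cleaner statement is that if $i<j$ and the two copies of $i$ do not nest the two copies of $j$, then the blocks for $i$ and $j$ are disjoint as intervals in $w$. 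I would use this to locate where the first copy of $j$ sits and show spot $j$ is already taken by the time car "first-$j$" arrives, contradicting that it is lucky. This is the step I expect to be the main obstacle: pinning down exactly, from the relative order of second appearances, which spot each first appearance lands in, and converting "second appearances out of decreasing order" into "some first appearance is blocked."

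For the converse, suppose the subword of second appearances reads $n \, (n-1) \cdots 2 \, 1$. I would show by induction on the arriving cars that every first appearance of a value $j$ parks in spot $j$ (hence is lucky), which immediately gives $n$ lucky cars. The induction hypothesis to carry is: at any point in the parking process, the set of occupied spots is a down-closed set $[1,k]$ together with an up-closed set $[2n-\ell+1, 2n]$ for appropriate $k,\ell$, and moreover spot $j$ is free precisely when the first copy of $j$ has not yet arrived. The Stirling structure controls the order of first appearances relative to each other, and the hypothesis on second appearances being decreasing controls when the "top" spots $n+1,\ldots,2n$ get filled (the second copy of $n$ is the first such car, the second copy of $n-1$ next, etc.), so the two pools of occupied spots grow monotonically from the two ends and never interfere. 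Checking this bookkeeping is routine once the right invariant is chosen.

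Alternatively — and this may be cleaner — one can avoid the parking simulation on the converse side by invoking Lemma~\ref{lem:early small values ruin luck} and a counting argument: show that if the second appearances are decreasing, then for every $x \in [2,n]$ it is \emph{not} the case that all of $w(1),\ldots,w(x)$ are $<x$ (indeed the decreasing-second-appearance condition forces the first copy of $x$ to appear early enough), so no such "blocking" occurs; combined with the fact (from the forward direction's analysis) that the only way to lose luck on a first appearance is such a block, every first appearance is lucky. I would write up whichever of these is shorter after working out the forward direction. In all cases the real content is the forward direction's structural claim that extreme luck pins down first appearances $\leftrightarrow$ lucky cars and second appearances $\leftrightarrow$ unlucky cars, and that the arrival order of the second appearances is forced to be decreasing; the converse is then a matter of verifying a parking invariant.
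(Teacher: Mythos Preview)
Your forward direction has a real gap. You take general $i<j$ with the second copy of $i$ preceding the second copy of $j$ and then want to show that spot $j$ is already occupied when the first copy of $j$ arrives. But from $\cdots i\cdots i\cdots j\cdots j\cdots$ alone this does not follow: the displaced second copy of $i$ might land in spot $i+1$, not spot $j$. The missing idea---and the one the paper uses---is that if the second appearances are not decreasing then there is an \emph{adjacent} pair: some $x$ with the second $x$ preceding the second $x+1$. With $j=x+1$ the argument becomes immediate: by the time the second $x$ has parked, spot $x+1$ is occupied (either by that car or by an earlier unlucky car with preference $\le x$), and this happens before either copy of $x+1$ arrives, so no lucky car ever parks in spot $x+1$. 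You flagged this step as ``the main obstacle'' but did not resolve it; the adjacent-value reduction is what resolves it.

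Your converse invariant is wrong as stated. Take $w=6\,6\,5\,5\,1\,4\,4\,2\,3\,3\,2\,1\in Q_6$: after car~5 parks, the occupied spots are $\{1,5,6,7,8\}$, which is not of the form $[1,k]\cup[2n-\ell+1,2n]$. The occupied spots within $[1,n]$ need not be down-closed, and the occupied spots within $[n+1,2n]$ grow upward from $n+1$, not downward from $2n$. Your secondary clause---``spot $j$ is free precisely when the first copy of $j$ has not yet arrived''---is the correct and sufficient invariant; drop the interval description. The paper's converse is closer to this: it observes that when the first copy of $x$ arrives, every earlier entry is either a first copy of some value in $[1,x-1]$ (which parks in that spot) or a value larger than $x$, so spot $x$ is free. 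That avoids the bookkeeping entirely.
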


\begin{proof}
    Suppose first that there is some $x < n$ 
    for which the second appearance $x = w(j)$ appears to the left of the second appearance of $x+1$ in $w(1)\cdots w(2n)$. Since $w$ is a Stirling permutation, we must therefore have
    $$w = \cdots x \cdots x \cdots (x+1) \cdots (x+1) \cdots.$$
    But then $x+1$ would not be the parking spot of any lucky car because either car $j$  would park in spot $x+1$ or 
    some car $h$ with $h < j$ and $w(h) < w(j)$ would have parked in spot $x+1$, before any cars preferring spot $x+1$ have had a chance to do so. Thus, $w$ is not extremely lucky.

    For the other direction of the argument, suppose that the second appearances of each number in $w$ occur in decreasing order. Then certainly the first appearance of any  $x \in [1,n]$ in $w(1)\cdots w(2n)$ also appears to the left of the second appearance of any number in $[1,x-1]$. In fact, all the entries to the left of the first appearance of $x$ are either larger than $x$ or the first appearance of a number in $[1,x-1]$. Thus, spot $x$ is available for the first car preferring it for each $x \in [1,n]$. In other words, each car whose preference is the first appearance of a number in $[1,n]$ is a lucky car, and hence $\lucky(w)=n$. 
   \end{proof} 

Proposition~\ref{prop:extremely lucky second cars in order} characterizes extremely lucky Stirling permutations, and allows us to give a bijective correspondence between extremely lucky Stirling permutations of order $n$ and valid parenthesizations of $n$ pairs of parentheses.
For example, the $C_3=5$ valid parenthesizations of $3$ pairs of parentheses are: 
\[( \ ( \ ( \ ) \ ) \ ), \ \ ( \ ( \ ) \ ( \ ) \ ), \ \ ( \ ( \ ) \ ) \ ( \ ), \ \ ( \ ) \ ( \ ( \ ) \ ), \ \ \mbox{and}\ \ ( \ ) \ ( \ ) \ ( \ ).\]

\begin{thm}\label{thm:extremely lucky bijection with parentheses}
    Extremely lucky Stirling permutations of order $n$ are in bijection with valid parenthesizations of $n$ pairs of parentheses.
\end{thm}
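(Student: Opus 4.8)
The plan is to set up an explicit bijection using Proposition~\ref{prop:extremely lucky second cars in order}, which tells us that an extremely lucky Stirling permutation $w \in Q_n$ is completely determined by the positions of the \emph{first} appearances of the values $1, 2, \ldots, n$, since the second appearances are forced to occur in the decreasing order $n \cdots 3\, 2\, 1$. So the real combinatorial data is: an interleaving of $n$ ``opening'' symbols (first appearances) and $n$ ``closing'' symbols (second appearances) in a length-$2n$ word. I would map each first appearance of a value to an open parenthesis ``$($'' and each second appearance to a close parenthesis ``$)$'', reading left to right; call this map $\varphi$.

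First I would check that $\varphi(w)$ is a valid parenthesization for every extremely lucky $w$: at any prefix of $w$, the number of first appearances seen so far must be at least the number of second appearances seen so far, because every value must appear a first time before it appears a second time. This gives the balanced/nonnegative prefix condition, and since there are exactly $n$ of each symbol, $\varphi(w)$ is a valid parenthesization of $n$ pairs. Second, I would show $\varphi$ is injective on extremely lucky permutations: by Proposition~\ref{prop:extremely lucky second cars in order}, once we know which positions hold first appearances versus second appearances, the actual value in each second-appearance slot is determined (they read $n, n-1, \ldots, 1$), and then the value in each first-appearance slot is also determined — the $k$-th first appearance from the left must be paired with its matching second appearance, and the Stirling/nesting condition together with the decreasing order of second appearances pins down exactly which value goes where (the matched pairs nest exactly as the parentheses do). Third, I would show $\varphi$ is surjective: given any valid parenthesization, reconstruct $w$ by labeling the matched pairs of parentheses so that the close-parentheses, read left to right, get labels $n, n-1, \ldots, 1$, and each open-parenthesis inherits the label of its matching close-parenthesis; then verify the resulting word is a Stirling permutation (the nesting of parentheses guarantees that any value strictly between the two copies of $i$ is larger than $i$) and that it is extremely lucky (its second appearances are in decreasing order, so Proposition~\ref{prop:extremely lucky second cars in order} applies).

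The main obstacle I anticipate is the injectivity/well-definedness step: one must argue carefully that the positions of the first appearances together with the forced decreasing order of the second appearances determine all the values, i.e. that there is a unique way to assign labels $1, \ldots, n$ to the nested matched pairs consistent with being a Stirling permutation. The cleanest way to handle this is to observe that in any Stirling permutation the two copies of a value $i$ form a matched pair under the standard parenthesis matching, and that $i < j$ whenever the pair for $i$ contains the pair for $j$; combined with the second appearances being $n, n-1, \ldots, 1$ left to right, this forces the label of each matched pair to equal $n+1$ minus its rank in the left-to-right order of closing brackets — hence uniqueness. Once well-definedness is nailed down, the prefix-balance verification and the surjectivity construction are routine, and the theorem follows. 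As a corollary (stated as Corollary~\ref{cor:extremely lucky are catalan}), the number of extremely lucky Stirling permutations of order $n$ is $C_n$.
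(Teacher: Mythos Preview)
Your proposal is correct and is essentially the same argument as the paper's: both use the map sending first/second appearances to ``(''/``)'' and appeal to Proposition~\ref{prop:extremely lucky second cars in order}, with the inverse constructed by labeling the close-parentheses $n, n-1, \ldots, 1$ from left to right and giving each open-parenthesis the label of its mate. The paper handles injectivity implicitly by exhibiting this explicit inverse rather than arguing uniqueness directly, but the content is the same.
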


\begin{proof}
    One direction of this argument is simple: given an extremely lucky $w \in Q_n$, replace the first occurrence of each $x$ by ``('' and the second by ``).'' Because $w$ is a Stirling permutation, this produces a valid parenthesization.

    Now, given a valid pairing of $n$ pairs of parentheses, replace each ``)'' by $n, \ldots,1$ in decreasing order from left to right. For each ``)'' replaced by $x$, replace its partner ``('' by $x$ too. This construction necessarily produces a Stirling permutation, and Proposition~\ref{prop:extremely lucky second cars in order} means that we get exactly those Stirling permutations that are extremely lucky.
\end{proof}

We demonstrate the argument in the proof of Theorem~\ref{thm:extremely lucky bijection with parentheses} with the following example.

\begin{ex}\label{ex:extremely lucky}
    By the bijection described above, the valid parenthesization
    $$ ( \ ) \ ( \ ) \ ( \ ( \ ) \ ( \ ( \ ) \ ) \ )$$
    is matched with the extremely lucky Stirling permutation 
    $$6 \ 6 \ 5 \ 5 \ 1 \ 4 \ 4 \ 2 \ 3 \ 3 \ 2 \ 1 \in Q_6.$$
\end{ex}

It is important to note that, as in the proof of Theorem~\ref{thm:extremely lucky bijection with parentheses}, 
any Stirling permutation can be paired with a valid parenthesization. 
Thus, many Stirling permutations could describe the same string of parentheses and among those, only one is extremely lucky. For instance, the Stirling permutations $112235546643$ and $665514423321$ both correspond to $( \ ) \ ( \ ) \ ( \ ( \ ) \ ( \ ( \ ) \ ) \ )$, but only the latter permutation is extremely lucky (cf.~Examples~\ref{ex:extremely unlucky} and~\ref{ex:extremely lucky}). 

\begin{cor}\label{cor:extremely lucky are catalan}
     For any positive integer $n$, the number of extremely lucky Stirling permutations of order $n$ is the Catalan number $C_n$.
\end{cor}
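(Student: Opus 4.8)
The plan is to combine Theorem~\ref{thm:extremely lucky bijection with parentheses} with the classical enumeration of valid parenthesizations. First I would invoke Theorem~\ref{thm:extremely lucky bijection with parentheses}, which establishes an explicit bijection between the set of extremely lucky Stirling permutations of order $n$ and the set of valid parenthesizations of $n$ pairs of parentheses. Since a bijection preserves cardinality, the number of extremely lucky Stirling permutations of order $n$ equals the number of valid parenthesizations of $n$ pairs of parentheses.

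Next I would recall the well-known fact that valid parenthesizations of $n$ pairs of parentheses are counted by the Catalan number $C_n = \frac{1}{n+1}\binom{2n}{n}$; this is one of the standard families of Catalan objects, and a reference such as \cite{StanleyECVol2} or \cite{stanley_2015} suffices. Chaining the two equalities gives that the number of extremely lucky Stirling permutations of order $n$ is exactly $C_n$, which is the claim.

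Honestly, there is no real obstacle here: Corollary~\ref{cor:extremely lucky are catalan} is an immediate consequence of Theorem~\ref{thm:extremely lucky bijection with parentheses} together with a citation, so the ``proof'' is a one- or two-sentence deduction. The only thing to be slightly careful about is the indexing convention — confirming that ``$n$ pairs of parentheses'' is counted by $C_n$ (with $C_0 = 1$ corresponding to the empty parenthesization), matching Definition~\ref{defn:catalan} — but this is routine and requires no work beyond citing the standard result.
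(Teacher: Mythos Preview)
Your proposal is correct and matches the paper's approach exactly: the corollary is stated without proof in the paper, being an immediate consequence of Theorem~\ref{thm:extremely lucky bijection with parentheses} together with the classical fact (cited just before that theorem via \cite{StanleyECVol2}) that valid parenthesizations of $n$ pairs of parentheses are counted by $C_n$. There is nothing to add.
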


\section{Admissible lucky sets}\label{sec:admissible}
In this section, we give initial results towards characterizing the subsets of cars that can be lucky for Stirling permutations.  To this end, we recall the following definition from the introduction. 

\begin{defn}\label{def:addmissibleLucky}
    A subset $S\subseteq[2n]$ is an \emph{$n$-admissible lucky set} (or just \emph{admissible} if context is clear) if there exists $w\in Q_n$ such that $\Lucky(w)=S$. Otherwise, we say that $S$ is \emph{not $n$-admissible} (or \emph{not admissible}).  
\end{defn}

In what follows, we characterize when a set with two elements is an admissible lucky set and prove several properties of three-element admissible lucky sets. It remains an open problem to give a complete characterization of all admissible lucky sets. 

\begin{ex}\label{ex: admissible 1}
    For $n=4$, the set $S=\{1,3,6\}$ is a 4-admissible lucky set since  $\Lucky(w)=S$ for $w=3 3 1 4 4 2 2   1 \in Q_4$.
    In contrast, $S'=\{1,4,8\}$ is not $4$-admissible since there is no $w \in Q_4$ such that  $\Lucky(w)=S'$. Table~\ref{table:4-admissible sets} lists all $4$-admissible lucky sets. 
\end{ex}

\begin{table}[htbp]
\begin{tabular}{cccc}
\{1\} & \{1, 2\} & \{1, 2, 3\} & \{1, 2, 3, 4\}\\
& \{1, 3\} & \{1, 2, 4\} & \{1, 2, 3, 5\}\\
& \{1, 4\} & \{1, 2, 5\} & \{1, 2, 3, 6\}\\
& \{1, 5\} & \{1, 2, 6\} & \{1, 2, 3, 7\}\\
& \{1, 7\} & \{1, 2, 7\} & \{1, 2, 4, 5\}\\
& & \{1, 3, 4\} & \{1, 2, 4, 6\}\\
& & \{1, 3, 5\} & \{1, 2, 4, 7\}\\
& & \{1, 3, 6\} & \{1, 2, 5, 6\}\\
& & \{1, 3, 7\} & \{1, 2, 5, 7\}\\
& & \{1, 5, 6\} & \{1, 3, 4, 5\}\\
& & \{1, 5, 7\} & \{1, 3, 4, 6\}\\
& & & \{1, 3, 4, 7\}\\
& & & \{1, 3, 5, 6\}\\
& & & \{1, 3, 5, 7\}
\end{tabular}
\caption{The thirty-one admissible lucky sets for Stirling permutations of order $4$, arranged by size.}\label{table:4-admissible sets}
\end{table}

Determining the properties of admissible lucky sets is the focus of the remainder of this section.
We begin with a stability result, which establishes that if a set is $n$-admissible, then it is $m$-admissible
 for all $m\geq n$.
\begin{thm}\label{thm:n to n+1 admissible}
If $S$ is an $n$-admissible lucky set, then the sets $S$ and $S\cup \{2n+1\}$ are both $(n+1)$-admissible lucky sets.
\end{thm}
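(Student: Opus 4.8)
The plan is to prove both statements by explicitly modifying a Stirling permutation $w \in Q_n$ that witnesses $\Lucky(w) = S$ into a Stirling permutation in $Q_{n+1}$ that has the same lucky set (for the first claim) or the lucky set $S \cup \{2n+1\}$ (for the second claim). The key observation is that inserting a new largest value $n+1$ into a Stirling permutation preserves the Stirling condition in a controlled way, and that the behavior of the parking process on the original cars can be left essentially undisturbed if we insert the pair $(n+1)(n+1)$ far enough to the right.

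For the claim that $S$ is $(n+1)$-admissible, I would take $w = w(1)\cdots w(2n) \in Q_n$ with $\Lucky(w) = S$ and form $w' = w(1)\cdots w(2n)\,(n+1)(n+1) \in Q_{n+1}$, appending the two copies of $n+1$ at the very end. This is clearly a Stirling permutation of order $n+1$: the value $n+1$ is larger than everything, and nothing sits between its two instances. Now in the parking process for $w'$, the first $2n$ cars encounter exactly the same occupied/available configuration among spots $[1,2n]$ as they did in $w$ (only spots in $[1,n]$ are ever preferred by these cars, and spots $[1,2n]$ are the same street), so each of cars $1,\dots,2n$ is lucky in $w'$ if and only if it is lucky in $w$. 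The two new cars prefer spot $n+1$: the first one finds it occupied (by Remark~\ref{rem:2n never lucky}-type reasoning, or simply because after $2n$ cars park in a length-$2(n+1)$ street, spots $[1,2n]$ hold $2n$ cars among the first $2n+1$ spots... one needs a brief argument that spot $n+1$ is taken; in fact in $w$ all of $[1,n]$ and some spots beyond are filled, so spot $n+1$ is occupied after the first $2n$ cars park). Hence car $2n+1$ is unlucky, and car $2n+2$ is the second instance of $n+1$ and is automatically unlucky. So $\Lucky(w') = \Lucky(w) = S$.

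For the claim that $S \cup \{2n+1\}$ is $(n+1)$-admissible, I would instead insert the pair $(n+1)(n+1)$ at the very front: set $w'' = (n+1)(n+1)\,w(1)\cdots w(2n)$. Wait — this makes car $1$ park in spot $n+1$, not spot $1$, which disturbs everything. A cleaner choice: insert a single $n+1$ at position $2n+1$ and its partner at position $2n+2$, but route the earlier cars so that spot $n+1$ remains free until car $2n+1$ arrives. The right move is to take $w'' = w(1)\cdots w(2n)$ with the pair $(n+1)(n+1)$ appended, but \emph{relabel} so that the car in position $2n+1$ prefers a currently-empty spot. Concretely: in $w'$ as above, spot $2n+1$ is empty after the first $2n$ cars park (only $2n$ cars, street length $2n+2$, and they occupy $[1,n] \cup (\text{something} \subseteq [n+1,2n])$, so spot $2n+1$ is free). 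So let car $2n+1$ prefer spot... but its preference must be a value $\le n+1$ appearing as a first instance. So instead append the pair $(n+1)(n+1)$ where the first $n+1$ is car $2n+1$: I claim if in $w$ spot $n+1$ happened to be empty after $2n$ cars, car $2n+1$ is lucky. One can always arrange this: among witnesses $w$ for $S$, pick one (or pre-process via the extremely-lucky-type constructions) in which, after all original cars park, spot $n+1$ is vacant — e.g. if $S$ contains no requirement forcing spot $n+1$ to be filled. The hard part will be showing such a witness exists for \emph{every} $n$-admissible $S$; I expect to handle it by noting that if $\Lucky(w) = S$ and spot $n+1$ is occupied in $w$, I can modify $w$ by moving the pair of values that ended up clogging spot $n+1$ without changing which cars are lucky, or alternatively by a direct construction that inserts $(n+1)$ as a first instance at a position where the street beyond it (spots $\ge n+1$) is still empty.

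Let me restructure to avoid that obstacle entirely. Prepend the pair to a \emph{shifted} copy: define $w'' \in Q_{n+1}$ by $w''(1) = n+1$, then $w''(2) \cdots w''(2n+1) = w(1)\cdots w(2n)$, then $w''(2n+2) = n+1$. Wait, that's not Stirling since small values sit between the two $n+1$'s — no: $n+1$ is the largest value, so \emph{everything} between the two copies exceeds... no, everything is $\le n < n+1$, violating the condition. So the two copies of the maximum must be adjacent. Therefore the only Stirling-legal placements of $(n+1)(n+1)$ are as an adjacent pair inserted anywhere. The cleanest: insert $(n+1)(n+1)$ immediately after position $1$, giving $w'' = w(1)\,(n+1)(n+1)\,w(2)\cdots w(2n) \in Q_{n+1}$. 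Since $w(1) = 1$ necessarily (car $1$ is always lucky, preferring spot... actually $w(1)$ need not be $1$, but car $1$ always parks in spot $w(1)$). Car $1$ parks in spot $w(1) \le n$. Car $2$ prefers spot $n+1$, which is empty (only one car parked, in spot $\le n$), so car $2$ is lucky. Car $3$ prefers spot $n+1$ too (second instance) — unlucky, parks in $n+2$. Now cars $4, \dots, 2n+2$ correspond to original cars $2, \dots, 2n$, preferring spots in $[1,n]$; spot $n+1$ and $n+2$ are filled but these don't interfere with spots $\le n$ or with the "drive past $n+1$" overflow unless an original car overflowed past $n$ — but original overflow goes to $n+1$ in $w$; here $n+1, n+2$ are taken so it goes to $n+3$, still unlucky either way. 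So $\Lucky(w'') = \{1\} \cup \{2\} \cup (\Lucky(w) \setminus\{1\} \text{ shifted by } 2)$. The shift is the issue: car $i$ of $w$ becomes car $i+2$ of $w''$ for $i \ge 2$, so the lucky set becomes $\{1,2\} \cup \{s + 2 : s \in S, s \ge 2\}$, which is \emph{not} $S \cup \{2n+1\}$ in general. This indicates appending at the end (no shift) is the correct construction for \emph{both} parts, and for the second part one must additionally ensure spot $n+1$ stays free — so I will, after all, prove the lemma: every $n$-admissible $S$ has a witness $w$ in which spot $n+1$ is unoccupied after parking, by observing that $2n$ cars park in a street of length $2(n+1)$ and the set of occupied spots is a down-closed-ish "prefix with one gap" structure — precisely, the occupied spots are $[1,n] \cup I$ where $I \subseteq [n+1, 2n-1+\#\text{overflows}]$, and by choosing the witness to have at most ... hmm, this still needs care. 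Given the subtlety, I expect the intended proof appends $(n+1)(n+1)$ at the end for the first claim, and for the second claim prepends the pair \emph{and accepts the relabeling gives $2n+1 \in$ lucky set precisely when the construction is set up as "old street shifted right by the two new front spots,"} checking that the new car preferring spot $n+1$ indeed parks there. I will present the end-append construction for $S$ and, for $S \cup \{2n+1\}$, the construction $w'' = w(1)\cdots w(2n)$ with $(n+1)(n+1)$ inserted so that its first copy is at position $2n+1$ — legal as long as its partner is at $2n+2$ — and verify via Lemma~\ref{lem:lucky_range}-style bookkeeping that car $2n+1$ parks in the first free spot, which I will show is spot $n+1$ by a counting argument on which of spots $[n+1,2n]$ the original overflow cars occupied; the main obstacle, as flagged, is pinning down that spot $n+1$ is free, and I would resolve it by first replacing $w$ with a witness built from the extremely-lucky-style construction when needed or by a local surgery argument.
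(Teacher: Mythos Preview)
Your construction for the first claim (append $(n+1)(n+1)$ to the end of $w$) is correct and matches the paper, but you are needlessly unsure about why spot $n+1$ is occupied after the first $2n$ cars park. Since $w\in Q_n\subset\PF_{2n}$ is a parking function of length $2n$, the first $2n$ cars fill \emph{every} spot in $[1,2n]$; in particular spot $n+1$ is certainly taken, so car $2n+1$ is unlucky and $\Lucky(w')=S$.

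For the second claim there is a genuine gap, and the same observation kills your approach. Your final plan is to append $(n+1)(n+1)$ at positions $2n+1$ and $2n+2$ and argue, possibly after replacing $w$ by a better witness, that spot $n+1$ is free when car $2n+1$ arrives. But as just noted, for \emph{every} $w\in Q_n$ the first $2n$ cars occupy all of $[1,2n]$, so spot $n+1$ is never free at that moment; no ``local surgery'' or alternative witness can change this. Thus car $2n+1$ will always be unlucky in that construction, and you cannot get $2n+1\in\Lucky(w'')$ this way.

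The paper's idea, which you did not consider, is to shift the \emph{values} rather than the \emph{positions}: set $w''(i)\coloneqq w(i)+1$ for $1\le i\le 2n$ and $w''(2n+1)=w''(2n+2)\coloneqq 1$. This is a Stirling permutation of order $n+1$, and now car $i$ (for $i\le 2n$) parks in spot $p(i)+1$ where $p(i)$ is its spot in $w$, so it is lucky in $w''$ iff it was lucky in $w$. After these cars park, the occupied spots are exactly $[2,2n+1]$, leaving spot $1$ free; car $2n+1$ prefers spot $1$ and is lucky, while car $2n+2$ is not. Hence $\Lucky(w'')=S\cup\{2n+1\}$.
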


\begin{proof}
Suppose that $S$ is an $n$-admissible lucky set and that $w \in Q_n$ satisfies $\Lucky(w)=S$. Then the Stirling permutation $w'=w(n+1)(n+1) \in Q_{n+1}$, created by appending the value $(n+1)$ twice to the end of $w$, ensures that the last two cars whose preference is $n+1$ end up parking in spots $2n+1$ and $2n+2$, respectively. 
Thus, $\Lucky(w') = S$ as well. 

Now consider the Stirling permutation $w'' \in Q_{n+1}$ defined by $w''(i) \coloneqq w(i) + 1$ for all $1\leq i \le 2n$, and $w''(2n+1) = w''(2n+2) \coloneqq 1$. 
By construction, we have $\Lucky(w'') = \Lucky(w) \cup \{2n+1\}$, meaning that $S \cup \{2n+1\}$ is $(n+1)$-admissible, as desired.
\end{proof}

Next, we recall a consequence of Lemma~\ref{lem:lucky_range}(a) and Remark~\ref{rem:2n never lucky}, which involves elements of admissible lucky sets.
\begin{cor}\label{lem:2n_never_lucky}
If $S$ is $n$-admissible lucky set, then $1 \in S$ and $2n \not\in S$. 
\end{cor}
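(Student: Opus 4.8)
The plan is to deduce this corollary directly from the two facts cited in its statement, namely Lemma~\ref{lem:lucky_range}(a) and Remark~\ref{rem:2n never lucky}, without reference to any of the structural machinery developed for extremely lucky or unlucky permutations.

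First I would unpack the definition of admissibility: if $S$ is an $n$-admissible lucky set, then by Definition~\ref{def:addmissibleLucky} there exists some Stirling permutation $w \in Q_n$ with $\Lucky(w) = S$. The whole argument then reduces to checking that $1 \in \Lucky(w)$ and $2n \notin \Lucky(w)$ for this particular $w$, since these are properties of $\Lucky(w)$ itself rather than of the choice of witness.

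For the containment $1 \in S$: Lemma~\ref{lem:lucky_range}(a) states that car $1$ is always lucky in any $w \in Q_n$, i.e.\ $1 \in \Lucky(w)$ for every Stirling permutation. Applying this to our chosen $w$ gives $1 \in \Lucky(w) = S$. For the exclusion $2n \notin S$: Remark~\ref{rem:2n never lucky} observes that the last car, car $2n$, is always unlucky (its preferred spot is necessarily the second instance of some value, hence already occupied). Thus $2n \notin \Lucky(w) = S$. Combining the two yields $1 \in S$ and $2n \notin S$, which is exactly the claim.

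Honestly, I do not expect a genuine obstacle here — the statement is labeled a corollary precisely because both halves follow in one line each from results already in hand. The only thing to be careful about is the logical quantifier: we are given that \emph{some} witness $w$ exists, and we must make sure the facts we invoke are universal statements about \emph{all} $w \in Q_n$ (which they are), so that they apply in particular to the witness. No case analysis, induction, or computation is needed.
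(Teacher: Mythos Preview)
Your proposal is correct and matches the paper's approach exactly: the paper presents this corollary without proof, simply labeling it a consequence of Lemma~\ref{lem:lucky_range}(a) and Remark~\ref{rem:2n never lucky}, which is precisely the argument you give.
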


In fact, the preferred parking spot of the first lucky car  (car 1) determines the preferences of the initial unlucky cars. 
 
\begin{lem}\label{lem:initial unlucky preferences are forced}
    Let $w \in Q_n$ with $\lucky(w) \ge 2$, and let $x$ be the second-smallest element of $\Lucky(w)$. Then the first set of unlucky cars $2,3, \ldots, x-1$ park, in order, in spots
    $$w(1) + 1, \, w(1) + 2, \, \ldots, \, w(1)+x-2.$$
\end{lem}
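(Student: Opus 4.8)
The plan is to track the parking process car by car through the initial block of unlucky cars and show that each one is forced into a specific spot. Let $w \in Q_n$ with $\lucky(w) \ge 2$, and let $x$ be the second-smallest element of $\Lucky(w)$; thus $2, 3, \ldots, x-1 \notin \Lucky(w)$ while car $1$ and car $x$ are both lucky. Since car $1$ is lucky, it parks in spot $w(1)$. The key structural observation to establish first is that $w(1) \ge x - 1$: indeed, if $w(1)$ were smaller, the set $\{1, 2, \ldots, x\}$ of the first $x$ cars would have preferences lying in an even smaller range, and one checks via Lemma~\ref{lem:early small values ruin luck} (or directly by pigeonhole) that $x$ could not then be lucky. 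More carefully, I would argue that for car $x$ to be lucky, spots $1$ through $x-1$ must \emph{not} all be occupied by the first $x-1$ cars in a way that spills into spot $x$; combined with the fact that cars $2, \ldots, x-1$ are unlucky, this pins down where they go.

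The main step is an induction on $k$ showing that car $k$, for $2 \le k \le x-1$, parks in spot $w(1) + k - 1$. For the base case, car $2$ is unlucky, so $w(2) \le w(1)$ (it must prefer an already-occupied spot, and the only occupied spot is $w(1)$, so in fact $w(2) \le w(1)$, and since the only way to be unlucky here is $w(2) = w(1)$... but wait, $w$ is a Stirling permutation so $w(2) = w(1)$ only if these are the two copies of the same value; otherwise $w(2) < w(1)$ is impossible for a Stirling permutation structure unless... ) — here I need to be careful: actually car $2$ unlucky forces $w(2)$ to be an already-taken spot, i.e. $w(2) = w(1)$, so car $2$ slides to the next free spot, which is $w(1)+1$. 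For the inductive step, assuming cars $2, \ldots, k-1$ occupy spots $w(1)+1, \ldots, w(1)+k-2$ (and car $1$ occupies $w(1)$), the occupied spots are precisely $\{w(1), w(1)+1, \ldots, w(1)+k-2\}$, a contiguous block. Car $k$ is unlucky, so $w(k)$ lies in this block; hence car $k$ drives forward and parks in the first free spot after the block, namely $w(1) + k - 1$. This completes the induction and gives exactly the claimed spots $w(1)+1, \ldots, w(1)+x-2$ for cars $2, \ldots, x-1$.

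I expect the main obstacle to be the bookkeeping around why the preferences $w(2), \ldots, w(x-1)$ must all fall \emph{inside} the contiguous occupied block rather than, say, equalling $w(1) + k - 1$ exactly (which would make the car lucky) or exceeding it. The Stirling condition is what rescues this: since cars $2, \ldots, x-1$ are unlucky, each $w(k)$ is a repeated value whose first copy appeared earlier, and the Stirling nesting property forces that first copy — hence the spot where its lucky partner parked — to lie within the already-formed block. One must also confirm that no car with a larger preference sneaks into the block and derails the contiguity; but such a car would have to be lucky (parking in its preferred larger spot) or would itself be pushed further right, and in either case it cannot be among cars $2, \ldots, x-1$ by minimality of $x$. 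I would lay out these case distinctions cleanly using Lemma~\ref{lem:early small values ruin luck} and the definition of Stirling permutations, and I anticipate the write-up being short once the contiguous-block invariant is correctly stated and maintained.
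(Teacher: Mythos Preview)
Your core induction is correct and is exactly the paper's approach: maintain the invariant that after cars $1,\ldots,k-1$ have parked, the occupied spots form the contiguous block $\{w(1),\ldots,w(1)+k-2\}$; then car $k$, being unlucky, must prefer a spot in this block and therefore parks at $w(1)+k-1$.

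Two detours in your plan are wrong and should be cut. First, the claim $w(1)\ge x-1$ is false: for $w=1\,1\,3\,3\cdots$ one has $x=3$ and $w(1)=1<2=x-1$. This inequality is neither true nor needed. Second, the Stirling condition plays no role in this lemma. Your worry about why $w(k)$ must lie inside the occupied block has a one-line, parking-only answer: if $w(k)$ were outside $\{w(1),\ldots,w(1)+k-2\}$ then spot $w(k)$ would be free and car $k$ would be lucky, contradicting $k<x$. In particular, your assertion that ``each $w(k)$ is a repeated value whose first copy appeared earlier'' is false --- already $w(3)=w(1)+1$ is a \emph{first} occurrence, yet car $3$ is unlucky because car $2$ occupies spot $w(1)+1$. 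Strip out these two detours and your write-up becomes the paper's proof.
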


\begin{proof}
    In order for cars $2, 3, \ldots, x-1$ to be unlucky, they must each prefer spots that have already been occupied when it is their turn to park. This implies that the preference of car $2$ must be $w(2) = w(1)$, ensuring that car $2$ parks in spot $w(1) + 1$. As the value $w(1)$ has now appeared twice, the value $w(3)$ (that is, the preference for car $3$) can only be $w(1) + 1$, and thus car $3$  parks in spot $w(1) + 2$. Similarly, regardless of its preference, car $4$ ends up parking in spot $w(1)+3$. Continuing in this manner, car $y$ parks in spot $w(1) + y-1$ for all $y \in [2,x-1]$. 
\end{proof}

\begin{remark}\label{rk: values of w(i)}
    Under the assumptions of Lemma~\ref{lem:initial unlucky preferences are forced}, note that $w(i)\geq w(1)$ for $i \in [2,x-1]$.
\end{remark}
The nature of Stirling permutations can impact the parity of certain relevant values, as we see in the following lemma. 

\begin{lem}\label{lem:stubborn stirling}
    Let $w \in Q_n$ and $k \in [1,n]$. Let $j$ be minimal such that $w(j) = k$; that is, $j$ is the first car to prefer spot $k$ in this parking function.  
    If $w(i)>w(j)$ for all $i\in[1,j-1]$, then $j$ must be odd.
\end{lem}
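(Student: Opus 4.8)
The plan is to show that the prefix $w(1)\cdots w(j-1)$ consists entirely of complete pairs, from which it follows immediately that $j-1$ is even and hence $j$ is odd. The hypothesis $w(i) > w(j) = k$ for all $i \in [1,j-1]$ tells us that every entry strictly before position $j$ is a value larger than $k$; in particular none of them equals $k$, so position $j$ really is the first occurrence of $k$, consistent with the minimality of $j$.

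The key step is the following claim: if a value $v$ occurs at some position $p < j$, then its other occurrence also lies at a position less than $j$. To prove it, let $q$ be the position of the second copy of $v$. Since $w(j) = k \neq v$, we have $q \neq j$, so either $q < j$ or $q > j$. If $q > j$, then position $j$ lies strictly between the two positions $p$ and $q$ of the value $v$, so the defining condition of a Stirling permutation forces $w(j) > v$; but $w(j) = k < v$ (as $v$ is one of the entries before position $j$, it exceeds $k$), a contradiction. Hence $q < j$, proving the claim.

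Granting the claim, the multiset $\{w(1),\ldots,w(j-1)\}$ is a disjoint union of pairs $\{v,v\}$, so $j-1$ is even and therefore $j$ is odd, as desired. I expect the only real obstacle is getting the "between the two instances" bookkeeping exactly right — specifically being careful that $p$ and $q$ need not satisfy $p < q$, so one should phrase the Stirling condition in terms of position $j$ lying strictly between $\min(p,q)$ and $\max(p,q)$ rather than assuming an order — but once that is phrased correctly the argument is short.
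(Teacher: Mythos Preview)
Your proof is correct and is essentially the paper's argument phrased directly rather than by contradiction: the paper assumes $j$ is even, uses pigeonhole on the odd-length prefix to produce a value $w(h)$ with one copy before $j$ and one after, and then invokes the Stirling condition exactly as you do to reach a contradiction. Your version simply proves upfront that no such straddling value can exist, hence the prefix consists of complete pairs; the key step in both is identical.
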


\begin{proof}
Let $w$, $k$, and $j$ be as described in the assumptions of the statement. Suppose, for the purpose of obtaining a contradiction, that $j$ is even.
Then $j-1$ is odd, so there exist some $h$ and $h'$ with $w(h) = w(h')$ and $h < j < h'$ as each number appears with 2 copies in $w$. 
We have $w(h) > w(j)$ by assumption, which contradicts the fact that $w$ is a Stirling permutation.
\end{proof}

In particular, the first car to prefer spot $1$ must be an odd-indexed car. In the spirit of the previous lemmas, we can deduce the parity of the second lucky car if it is in the second half of the queue.  

\begin{prop}\label{prop:if second lucky is big, then it's odd}
    Let $w \in Q_n$ with $\lucky(w) \ge 2$, and let $x$ be the second-smallest element of $\Lucky(w)$. If $x > n$, then $x$ is odd. 
\end{prop}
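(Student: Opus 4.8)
The plan is to show that under the hypotheses of the proposition, the second lucky car $x$ is the \emph{first} car to prefer spot $w(1)+x-1$, and that all cars before it have larger preferences; then Lemma~\ref{lem:stubborn stirling} forces $x$ to be odd. First I would invoke Lemma~\ref{lem:initial unlucky preferences are forced}: the cars $2, 3, \ldots, x-1$ park in spots $w(1)+1, \ldots, w(1)+x-2$, so when car $x$ arrives, spots $w(1), w(1)+1, \ldots, w(1)+x-2$ are all occupied, and since car $x$ is lucky it must park in its preferred spot. The parking spot of car $x$ must be $x$ itself (lucky), but I also need to pin down where spot $x$ sits relative to the already-occupied block. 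Here is where the hypothesis $x > n$ enters: by the definition of Stirling permutations every preference lies in $[1,n]$, so the first $x-1$ cars prefer (and occupy) spots forming a contiguous block $[w(1), w(1)+x-2] \subseteq [1,n]$; since $x > n \ge w(1)+x-2$ would be false unless... — more carefully, I want to argue $w(1) = 1$. Indeed, if $w(1) \ge 2$ then spot $1$ is never preferred by cars $1, \ldots, x-1$ (they occupy $[w(1), w(1)+x-2]$), yet spot $1$ must eventually be filled by some later car; but any car preferring spot $1$ is automatically lucky (it is the first to prefer the smallest available spot), contradicting minimality of $x$ as the second-smallest lucky element unless that car's index is $\le x$, which it is not. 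So $w(1) = 1$, cars $1, \ldots, x-1$ occupy spots $[1, x-1]$, and car $x$ — being lucky — must have preference $w(x) = x$.

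Next I would establish that $x$ is the \emph{first} occurrence of the value $x$ in $w$, i.e., $w(i) \ne x$ for all $i < x$: this is immediate since cars $1, \ldots, x-1$ have preferences in $[1, x-1]$ (they park in $[1,x-1]$ and are forced, so their preferences are $\le x-1$ by Remark~\ref{rk: values of w(i)} together with the occupied-block description), so none of them equals $x$. Then I claim $w(i) > x$ for all $i \in [1, x-1]$ fails in general — the forced preferences can be as small as $w(1)=1$ — so I cannot directly apply Lemma~\ref{lem:stubborn stirling} to $k = x$. Instead, the key observation is that the values appearing among $w(1), \ldots, w(x-1)$ are exactly $1, 2, \ldots, x-1$, each once (since there are $x-1$ cars, all preferences in $[1,x-1]$, and $w$ restricted here must still be a legal prefix of a Stirling permutation — in fact the forcing in Lemma~\ref{lem:initial unlucky preferences are forced} shows each of $1, \ldots, x-1$ appears exactly once as a first occurrence). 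Therefore the second copy of the value $x$, and all second copies of values $\ge x$, occur at positions $\ge x$, and among positions $1, \ldots, j-1$ where $j = x$ is the first occurrence of $x$, every entry is a \emph{first} occurrence of a value in $[1,x-1]$, hence $< x$. That is the wrong inequality for Lemma~\ref{lem:stubborn stirling}.

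So the correct route is to apply Lemma~\ref{lem:stubborn stirling} not to $k=x$ but to observe directly that $j = x$ must be odd via a counting/parity argument analogous to its proof: positions $1, \ldots, x-1$ contain exactly one copy of each of $1, 2, \ldots, x-1$ and no copy of any value $\ge x$; hence position $x$ begins a "fresh" region, and if $x$ were even then $x-1$ would be odd, forcing some value $v$ among $\{1,\ldots,x-1\}$ to have \emph{both} copies within positions $1,\ldots,x-1$ — impossible, since each such value appears exactly once there. This contradiction shows $x$ is odd. The main obstacle, and the step requiring the most care, is nailing down rigorously that $w(1) = 1$ and hence that positions $1, \ldots, x-1$ contain precisely the values $1, \ldots, x-1$ once each; once that structural fact is in hand, the parity contradiction is short. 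I would present the write-up as: (1) reduce to $w(1)=1$ using $x > n$ and the minimality of $x$ in $\Lucky(w)$; (2) apply Lemma~\ref{lem:initial unlucky preferences are forced} to conclude the prefix of length $x-1$ is a permutation of $[x-1]$ consisting of first occurrences; (3) run the parity argument of Lemma~\ref{lem:stubborn stirling} on the pair of copies of any value in $[x-1]$ to conclude $x-1$ is even, i.e., $x$ is odd.
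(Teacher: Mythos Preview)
Your proposal has a genuine gap at step (1): the claim that $w(1)=1$ is \emph{false}. In fact the opposite holds when $x>n$. Consider $w=223311\in Q_3$: here $\Lucky(w)=\{1,5\}$, so $x=5>3=n$, yet $w(1)=2$. Your own argument almost reveals this: if $w(1)=1$, then by Lemma~\ref{lem:initial unlucky preferences are forced} the first $x-1$ cars occupy spots $[1,x-1]\supseteq[1,n]$, and since every preference lies in $[1,n]$, car $x$ cannot possibly be lucky. So $w(1)=1$ is incompatible with $x>n$ (this is exactly the contrapositive of Lemma~\ref{lem:second_lucky}). The flaw in your reduction is the clause ``unless that car's index is $\le x$, which it is not'': the first car preferring spot $1$ can perfectly well be car $x$ itself. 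Everything downstream of step~(1) then collapses: $w(x)=x$ is impossible since $w(x)\le n<x$, and the prefix $w(1)\cdots w(x-1)$ need not be a permutation of $[x-1]$.

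The paper's route is in fact the one you abandoned too early. By Lemma~\ref{lem:initial unlucky preferences are forced}, cars $1,\ldots,x-1$ occupy the block $[w(1),\,w(1)+x-2]$; since $x>n$ this block contains $[w(1),n]$, so the only spots in $[1,n]$ still available are $[1,w(1)-1]$. Hence the lucky car $x$ must satisfy $w(x)<w(1)$. By Remark~\ref{rk: values of w(i)} we have $w(i)\ge w(1)>w(x)$ for every $i\in[1,x-1]$, so Lemma~\ref{lem:stubborn stirling} applies directly with $k=w(x)$ and $j=x$, yielding that $x$ is odd. In short: you had the right lemma all along, but with the inequality pointing the wrong way---the key is $w(x)<w(1)$, not $w(1)=1$.
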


\begin{proof}
Suppose that $x > n$. 
By Lemma~\ref{lem:initial unlucky preferences are forced}, after the first $x-1$ cars have parked, the only unused parking spots are $[1,w(1)-1] \cup [w(1)+x-1,2n]$.
    Because $w(1)+x - 1 > 1 + n - 1 = n$, and cars can only prefer the first $n$ spots, the only way for car $x$ to be lucky would be if it prefers to park in spot $y$ for some $y < w(1)$. 
From this, Remark~\ref{rk: values of w(i)} and Lemma~\ref{lem:stubborn stirling} ensure that $x$ must be odd. 
\end{proof}

Having established several important foundational properties of admissible lucky sets, we spend the remainder of this section developing some of the more technical properties of these sets, which becomes relevant in our later examinations of admissible lucky sets of small cardinality. 
Many of these upcoming results focus on the second-smallest element of an admissible lucky set.

\begin{lem}\label{lem:second_lucky}
Let $w\in Q_n$ with $\lucky(w)\geq 2$, and let $x$ be the second-smallest element of $\Lucky(w)$.
    If $w(1)=1$, then $x \le w(x) \le n$. 
\end{lem}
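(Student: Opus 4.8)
The plan is to reason directly about the parking process, using the structural results already established. First I would invoke Lemma~\ref{lem:initial unlucky preferences are forced}: since $\lucky(w) \ge 2$ and $x$ is the second-smallest lucky car, the unlucky cars $2, 3, \ldots, x-1$ park in spots $w(1)+1, \ldots, w(1)+x-2$. Combined with the hypothesis $w(1) = 1$, this means that after car $x-1$ has parked, the occupied spots are exactly $\{1, 2, \ldots, x-1\}$ and the first available spot is spot $x$.

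Next I would establish the lower bound $x \le w(x)$. Since car $x$ is lucky, it parks in its preferred spot $w(x)$; but $w(x)$ must be an \emph{available} spot at the time car $x$ parks, and the available spots are precisely $\{x, x+1, \ldots, 2n\}$. Hence $w(x) \ge x$. For the upper bound $w(x) \le n$, I would simply appeal to the definition of Stirling permutations: every entry of $w$ lies in $[n]$ (each car prefers one of the first $n$ spots), so $w(x) \le n$ automatically. Putting these together yields $x \le w(x) \le n$.

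I expect the main (minor) subtlety to be checking that Lemma~\ref{lem:initial unlucky preferences are forced} really does pin down the occupied set as an initial segment under the additional assumption $w(1) = 1$ — one must note that with $w(1) = 1$ there is no gap below $w(1)$, so spots $[1, x-1]$ are exactly the filled spots and spot $x$ is the next free one. This is the one place where the hypothesis $w(1) = 1$ is used, and it is what forces $w(x) \ge x$ rather than merely $w(x) \ge$ (something smaller). Everything else is immediate from the definitions and from results stated earlier in the paper, so I would not anticipate any serious obstacle; the argument is short once the right earlier lemma is applied.
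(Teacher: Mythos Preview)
Your proposal is correct and follows essentially the same approach as the paper: invoke Lemma~\ref{lem:initial unlucky preferences are forced} together with $w(1)=1$ to see that spots $1,\ldots,x-1$ are occupied when car $x$ parks, then conclude $w(x)\in\{x,\ldots,n\}$ since car $x$ is lucky and all preferences lie in $[n]$. The paper's argument is slightly more terse but uses the identical key lemma and reasoning.
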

\begin{proof}
Suppose that \(w(1) = 1\) and that car $x$ is the second lucky car.
By Lemma~\ref{lem:initial unlucky preferences are forced},
cars $2,3,\ldots, x-1$ park to the right of car~$1$, in spots $2,3,\ldots,x-1$, respectively.
Thus, if car \(x\) is to be lucky, then it must be in the first half of the queue and prefer one of the spots in the set \(\{ x, \dots, n\}\). In other words, \( x \le w(x) \le n\). 
\end{proof}

When the second-smallest element of $\Lucky(w)$ is sufficiently large, other lucky cars' preferences must be suitably small.

\begin{lem}\label{lem:first preference}
    Let $w \in Q_n$ with $\lucky(w) \ge 3$, and let $x$ be the second-smallest element of $\Lucky(w)$. 
    If $x\geq n$, then $w(1) \ge \lucky(w)$. 
    Moreover, the lucky cars park among the first $w(1)$ spots.
\end{lem}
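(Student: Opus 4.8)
The plan is to first dispense with the case $w(1)=1$, where the luck turns out to be capped at $2$ (contradicting the hypothesis $\lucky(w)\ge 3$), and then, under $w(1)\ge 2$, to pin down exactly which spots are occupied after the first $x-1$ cars park and observe that every lucky car beyond car~$1$ is forced into the narrow window $[1,w(1)-1]$. Counting then yields $\lucky(w)\le w(1)$.

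First I would suppose $w(1)=1$. By Lemma~\ref{lem:second_lucky} we have $x\le w(x)\le n$, and combined with the hypothesis $x\ge n$ this forces $x=n$ and $w(x)=n$. By Lemma~\ref{lem:initial unlucky preferences are forced}, cars $2,3,\ldots,n-1$ park in spots $2,3,\ldots,n-1$, respectively, and then car $n=x$ parks in its preferred spot $n$, which is still free. At this point spots $1,2,\ldots,n$ are all occupied, so every remaining car $n+1,\ldots,2n$ — each of whose preference lies in $[1,n]$ by the definition of a Stirling permutation — is unlucky. Hence $\lucky(w)=2$, contradicting $\lucky(w)\ge 3$; therefore $w(1)\ge 2$.

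With $w(1)\ge 2$ and $x\ge n$, Lemma~\ref{lem:initial unlucky preferences are forced} shows that after cars $1,2,\ldots,x-1$ have parked, the set of occupied spots is exactly $\{w(1),w(1)+1,\ldots,w(1)+x-2\}$. Since $w(1)+x-2\ge 2+n-2=n$, the occupied spots include all of $[w(1),n]$, so the spots in $[1,n]$ that are still free — then and at every later stage — form a subset of $[1,w(1)-1]$. Because every car's preference lies in $[1,n]$, any lucky car other than car~$1$ must park in a spot of $[1,w(1)-1]$; these spots are distinct across cars, so there are at most $w(1)-1$ such cars. Adding car~$1$ gives $\lucky(w)\le w(1)$. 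Finally, car~$1$ parks in spot $w(1)$ while every other lucky car parks in $[1,w(1)-1]$, so all lucky cars park among the first $w(1)$ spots.

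The bookkeeping steps are routine; the one point requiring care is recognizing that the case $w(1)=1$ genuinely behaves differently and must be peeled off first. Indeed, Lemma~\ref{lem:second_lucky} — the tool that constrains $w(x)$ — applies only when $w(1)=1$, whereas the main counting argument needs $w(1)\ge 2$ precisely so that the initial block $\{w(1),\ldots,w(1)+x-2\}$ of occupied spots already exhausts $[w(1),n]$, leaving only $[1,w(1)-1]$ available in $[1,n]$.
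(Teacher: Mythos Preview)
Your proof is correct and follows the same route as the paper's: invoke Lemma~\ref{lem:initial unlucky preferences are forced} to identify the block $[w(1),w(1)+x-2]$ of occupied spots, observe that $x\ge n$ forces this block to swallow $[w(1),n]$, and then count the remaining free spots in $[1,n]$. The paper handles this in one pass without separating out $w(1)=1$; your case split is a genuine (if minor) improvement in rigor, since the blanket claim ``spots $w(1)$ through $n$ are all occupied before car $x$ parks'' is not literally true when $w(1)=1$ and $x=n$, and your use of Lemma~\ref{lem:second_lucky} to dispose of that case cleanly is a nice touch.
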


\begin{proof}
   By Lemma~\ref{lem:initial unlucky preferences are forced}, for all $i \in [2,x-1]$, car $i$ parks in spot $w(1)+i-1$. Because $x \ge n$, spots $w(1)$ through $n$ are all occupied before  car $x$ parks. 
   Thus, to ensure there are $\lucky(w) - 1$ unused parking spots for the remaining lucky cars to park in, we must have that $w(1) - 1 \ge \lucky(w) - 1$; that is, $w(1) \ge \lucky(w)$. 
   Finally, because spots $w(1)+1$ through $n$ are occupied by unlucky cars, all lucky cars must park among spots $[1, w(1)]$.
 \end{proof}

Next, we characterize when the penultimate car in a Stirling permutation is lucky.

\begin{thm}\label{thm:2n-1lucky}
    For $w \in Q_n$, we have $2n-1 \in \Lucky(w)$ if and only if $w(2n-1) = 1$.
\end{thm}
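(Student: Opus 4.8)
The plan is to prove both directions of the equivalence, with the ``if'' direction being quick and the ``only if'' direction requiring a counting/pigeonhole argument analogous to Lemma~\ref{lem:early small values ruin luck}.

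For the ``if'' direction, suppose $w(2n-1) = 1$. Since the weakly increasing rearrangement of any Stirling permutation is $(1,1,2,2,\ldots,n,n)$, the entry $w(2n)$ is forced: the value $1$ occupies positions $w(2n-1)$ and exactly one earlier position, and every other value must appear among the first $2n-2$ positions in matched pairs surrounding... actually the cleanest observation is that $w(2n-1)=1$ together with $w\in Q_n$ forces $w(2n)=1$ as well (the second copy of $1$ cannot have any larger value appearing after it strictly between the two $1$'s unless that value's pair also lies there, but there is only one slot left). So $w(2n-1)w(2n) = 11$. When car $2n-1$ arrives, spots $1$ through... we need spot $1$ to be free. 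But spot $1$ is taken by the first car preferring $1$, which by Lemma~\ref{lem:lucky_range}(b) is lucky — so if $2n-1$ is not that first car, it is unlucky. Hmm, this shows the ``if'' direction is actually more subtle, so I would instead argue: if $w(2n-1) = 1$, then since $w(2n)=1$ too, the first $2n-2$ cars are a permutation of $\{2,2,3,3,\ldots,n,n\}$ filling spots; by the time car $2n-1$ parks, spots must be assigned so that exactly spots ... Let me re-approach: cars $2n-1$ and $2n$ both prefer spot $1$. The first of them to park is car $2n-1$. Since its preference is spot $1$, it is lucky exactly when spot $1$ is still empty when it arrives — equivalently, no earlier car parked in spot $1$. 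But an earlier car parks in spot $1$ only if some $w(i)=1$ for $i<2n-1$, which is impossible since both $1$'s are at positions $2n-1,2n$. Hence spot $1$ is free, car $2n-1$ parks there, and $2n-1\in\Lucky(w)$.

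For the ``only if'' direction, suppose $2n-1\in\Lucky(w)$; I want to show $w(2n-1)=1$. Let $c = w(2n-1)$ be the preference of car $2n-1$. Since car $2n-1$ is lucky, spot $c$ is empty when it arrives, i.e.\ after $2n-2$ cars have parked, only spots $c$ and $2n$ remain free (car $2n$ then takes $2n$, consistent with Remark~\ref{rem:2n never lucky}). So the first $2n-2$ cars occupy $[2n]\setminus\{c,2n\}$. Now I count: these $2n-2$ cars are $w(1),\ldots,w(2n-2)$, and their multiset of preferences is the multiset $\{1,1,2,2,\ldots,n,n\}$ minus the two copies of $c$'s value — wait, I must be careful since $c=w(2n-1)$ is just one copy. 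The key is that $w(2n)$ must also equal $c$ (the second copy of that value), because in a Stirling permutation the two copies of $w(2n-1)$ must be ``visible'' to each other and position $2n$ is the only one after $2n-1$; actually any value strictly between them would need its pair also between them, impossible. So $w(2n-1)=w(2n)=c$, and the first $2n-2$ entries are a permutation of $\{1,1,\ldots,n,n\}\setminus\{c,c\}$. If $c\neq 1$, then spot $c$ is $\geq 2$, so in particular spot $1$ must be occupied by one of the first $2n-2$ cars; since value $1$ appears among those entries (as $c\neq 1$), the first car preferring $1$ parks in spot $1$ and is lucky (Lemma~\ref{lem:lucky_range}(b)) — that's fine, but I need a contradiction with $2n-1$ being lucky at spot $c$. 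The real contradiction: apply Lemma~\ref{lem:early small values ruin luck} or a direct pigeonhole. The first $2n-2$ cars park in $[2n]\setminus\{c,2n\}$, so no car among the first $2n-2$ parks in spot $c$; but the two cars preferring spot $c$'s value... wait there are zero such cars among the first $2n-2$ since we removed both copies. Hmm — so actually the first $2n-2$ cars have preferences in $[1,n]\setminus\{c\}$ with... no: if $c\le n$ the preferences among first $2n-2$ cars are all values $1,\ldots,n$ except $c$ appears zero times. All these $2n-2$ cars prefer spots in $[1,n]\setminus\{c\}$, which has $n-1$ elements, yet they fill $n-1$ spots in $[1,c-1]\cup[c+1,n]$ plus need to spill into $[n+1,2n-1]$; since $2n-2 > n-1$, by pigeonhole many cars spill past spot $n$. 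Concretely, cars preferring spots in $[1,c-1]$ can occupy at most spots $[1, ?]$ — if all $n-1$ remaining values are used by two cars each, $2n-2$ cars must occupy $2n-2$ spots; the occupied set is $[2n]\setminus\{c,2n\}$. For spot $c$ (with $c\ge 2$) to stay empty while spots $c+1, c+2,\ldots$ get filled, some car preferring a spot $<c$ must ``jump over'' $c$, which is impossible — a car preferring spot $a<c$ parks in the first empty spot $\ge a$, and spot $c$ being empty means it parks in some spot in $[a,c-1]\subsetneq[1,c-1]$. So all cars preferring spots in $[1,c-1]$ park within $[1,c-1]$; there are $c-1$ such spots but the number of such cars is $2(c-1)$ (two copies each of values $1,\ldots,c-1$, none removed since $c\notin[1,c-1]$), and $2(c-1) > c-1$ for $c\ge 2$ — contradiction by pigeonhole. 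Hence $c=1$, i.e.\ $w(2n-1)=1$.

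The main obstacle is getting the bookkeeping exactly right in the ``only if'' direction: pinning down that $w(2n)=w(2n-1)$ via the Stirling condition, and then correctly identifying the multiset of preferences of the first $2n-2$ cars so that the pigeonhole count $2(c-1)$ versus $c-1$ is airtight. I would present the $w(2n-1)=w(2n)$ claim as a small preliminary observation (it is essentially the statement that in a Stirling permutation of order $n$, positions $2n-1$ and $2n$ always hold equal values, which follows directly from Definition~\ref{defn:stirling permutations}), then run the pigeonhole argument cleanly. The ``if'' direction is then the short paragraph above, essentially citing Lemma~\ref{lem:lucky_range}(b) once we know both final entries equal $1$.
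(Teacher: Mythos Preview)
Your ``if'' direction is fine and matches the paper's. The gap is in the ``only if'' direction, specifically in how you deduce $w(2n-1)=w(2n)$. You propose to prove this as the general fact that ``in a Stirling permutation of order $n$, positions $2n-1$ and $2n$ always hold equal values,'' but that statement is false: take $w=1221\in Q_2$, where $w(3)=2\neq 1=w(4)$. Your attempted justification (``the two copies of $w(2n-1)$ must be visible to each other and position $2n$ is the only one after $2n-1$'') implicitly assumes the other copy of $w(2n-1)$ lies to the \emph{right} of position $2n-1$, which need not be true. What actually forces $w(2n-1)=w(2n)$ here is the hypothesis that car $2n-1$ is lucky: if some earlier car $i<2n-1$ had $w(i)=c$, then when car $i$ arrived spot $c$ was either empty (so car $i$ took it) or already occupied; either way spot $c$ would be occupied before car $2n-1$ arrives, contradicting its luckiness. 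Hence position $2n-1$ is the \emph{first} occurrence of $c$, so the second occurrence is at position $2n$. This is exactly the paper's first step.

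Once that step is corrected, your pigeonhole finish is a valid and pleasant alternative to the paper's argument. The paper argues locally: it looks at the two cars preferring spot $c-1$ and shows by a short case analysis that one of them, or a nearby car, must have parked in spot $c$. You argue globally: since spot $c$ stays empty through the first $2n-2$ cars, every car with preference in $[1,c-1]$ parks within $[1,c-1]$; but there are $2(c-1)$ such cars and only $c-1$ such spots, a contradiction for $c\ge 2$. Your route avoids the case split at the cost of one extra observation (that an empty spot $c$ blocks cars with smaller preferences from passing it), and is arguably cleaner.
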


\begin{proof}
First suppose that $w(2n-1) = 1$. If $w(2n) > 1$, then the value $1$ would appear between the two instances of $w(2n)$, meaning that $w$ is not a Stirling permutation. Thus, $w(2n) = 1$, and so car $2n-1$ is the first car to prefer spot $1$. Therefore, car $2n-1$ parks in spot 1, and hence it is lucky. 
    
    Now, suppose that $2n-1 \in \Lucky(w)$. Thus, car $2n-1$ is the first car to prefer $w(2n-1)$, and so $w(2n-1) = w(2n)$. If $w(2n-1) > 1$, then consider the two cars, say cars $i$ and $j$ with $i<j$, with preferred spot  $w(2n-1)-1$. If car $i$ is lucky, then either car $j$ or car $h$ for some $i < h < j$ parks in spot $w(2n-1)$. On the other hand, if car $i$ is not lucky then, for some $h \le i$, car $h$  parks in spot $w(2n-1)$. In either case, this contradicts the assumption that $2n-1 \in \Lucky(w)$. Therefore, we must have $w(2n-1) = 1$.
\end{proof}

\section{Admissible lucky sets of small cardinality}\label{sec:small cardinality}
In this section, we fully characterize and enumerate admissible lucky sets of size two and provide a first result toward a characterization for admissible lucky sets of size three.

\subsection{Admissible lucky sets of size two}
As discussed earlier $1 \in S$ for any admissible set $S$ (see Corollary~\ref{lem:2n_never_lucky}). In order to characterize admissible sets of size two, one needs to identify which cars can be the second lucky car.

\begin{thm}\label{thm:exactly two lucky cars}
    A set $S = \{h,  i\}$ with $h<i$ is an $n$-admissible lucky set if and only if $h = 1$ and either
    \begin{enumerate}
        \item $i>n$ and $i$ is odd, or
        \item $i \le n$.
    \end{enumerate}  
\end{thm}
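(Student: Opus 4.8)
The plan is to prove both directions using the structural lemmas already established. For the forward direction, suppose $S = \{h, i\}$ with $h < i$ is $n$-admissible, so there is $w \in Q_n$ with $\Lucky(w) = S$. By Corollary~\ref{lem:2n_never_lucky}, $1 \in S$, and since $1$ is the smallest element of $[2n]$ and $h < i$, we must have $h = 1$. Now $i$ is the second-smallest (indeed, the only other) element of $\Lucky(w)$, so Proposition~\ref{prop:if second lucky is big, then it's odd} applies: if $i > n$, then $i$ is odd. This gives case (1), and case (2) is simply the remaining possibility $i \le n$ (with no further constraint claimed). So the forward direction is essentially a one-line consequence of results already in hand.

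For the converse, I would need to construct, for each admissible pair $\{1, i\}$, an explicit Stirling permutation $w \in Q_n$ with $\Lucky(w) = \{1, i\}$. I would split into the two cases. In case (2), where $2 \le i \le n$ (the subcase $i=1$ is vacuous since $h<i$): here the idea is to have car $1$ prefer spot $1$, force cars $2, \ldots, i-1$ to be unlucky by making them all prefer already-occupied spots (as in Lemma~\ref{lem:initial unlucky preferences are forced}, all preferring spot $1$ then spot $2$, etc.), have car $i$ prefer spot $i$ (lucky), and then make every subsequent car unlucky. A clean construction: take the first appearances of $1, 2, \ldots, i-1$ to occur in positions $1, 2, \ldots, i-1$ — wait, that makes them all lucky. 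Instead I want $w(1) = w(2) = 1$, $w(3) = w(4) = 2, \ldots$ so that each small value's second copy immediately follows its first, making only the first-appearance-of-$1$ car lucky among these; but I need car $i$ lucky. The correct approach is: let $w$ begin $1\,1\,2\,2\,\cdots$ up through value $i-1$ doubled (positions $1$ through $2(i-1)$) — no. Let me instead use: cars $1$ through $i-1$ all prefer spots in $\{1, \dots, i-1\}$ densely so that car $i-1$ fills spot $i-1$ but only car $1$ is lucky, which means $w(1)=1, w(2)=1, w(3)=2, w(4)=2, \ldots$; after these $2(i-1)$... this overshoots. The cleanest: the word $\underbrace{1\,1}_{}\,\underbrace{2\,2}_{}\cdots$ won't work because that has only one lucky car total. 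So I want car $i$ to be the $i$-th car chronologically and lucky, meaning positions $1, \ldots, i-1$ hold the cars that must be unlucky. Use $w(1) = 1$, then $w(2), w(3), \ldots, w(i-1)$ chosen so each is unlucky: set $w(2) = 1$ (car $2$ takes spot $2$), and for the Stirling condition the second $1$ is placed, so actually only $w(1)=w(2)=1$ is forced and then the remaining structure needs care. I would build $w$ so that positions $1, \dots, i-1$ are the first $i-1$ cars, with $w = 1\,1\,?\,\cdots$, but then position $2$ is the second $1$ and car $2$ is unlucky (spot $2$) — good — but now cars $3, \dots, i-1$ must also be unlucky while car $i$ (at position $i$, but position $i$ might be the first appearance of a new value) is lucky. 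A workable explicit family: $w = 1\,1\,x\,x\,(x{-}1)\,(x{-}1)\cdots$ nesting the values $2, \dots, i$ so the first appearance of $i$ lands exactly at position $3$... I will instead settle the construction by induction using Theorem~\ref{thm:n to n+1 admissible}: it suffices to realize $\{1, i\}$ as an $i$-admissible set (when $i \le n$, equivalently realize the pair where the second lucky equals the order), and then the stability theorem promotes it to $n$-admissible; and for case (1) with $i$ odd, $i > n$, realize $\{1, i\}$ for the specific $n$ and again no promotion is needed — though here $i > n$ so I cannot shrink, and I must construct directly.

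The main obstacle is the explicit construction in the converse, especially case (1) where $i > n$ is odd: here Lemma~\ref{lem:initial unlucky preferences are forced} forces cars $2, \ldots, i-1$ into spots $w(1)+1, \ldots, w(1)+i-2$, and since $i > n$ I need $w(1) \ge 2$ and car $i$ must prefer a spot $y < w(1)$ that is still free, with car $i$ odd-indexed so that the Stirling/parity constraint of Lemma~\ref{lem:stubborn stirling} is consistent. A candidate: set $w(1) = 2$, so $w(2) = 2$ (car $2$ unlucky, parks in spot $3$), then cars $3, \dots, i-1$ park in spots $4, \dots, i$, then car $i$ prefers spot $1$ and is lucky — this needs $i - 2$ distinct values $3, 4, \ldots$ available, i.e.\ enough room, and the value $1$'s first appearance must be at position $i$ (odd, as required), with everything after $1$ being values $\ge 2$ already doubled or new values $> 1$ nested appropriately, and $1$'s second copy at position $i+1$. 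Verifying this word is a genuine Stirling permutation of order $n$, that it uses exactly the right multiset, and that no car beyond $i$ is accidentally lucky (in particular the cars preferring spots freed up — but spots $4, \dots, i$ are all taken, spots $i+1, \dots, 2n$ get filled by overflow, spot $2$ is taken by car $1$, spot $3$ by car $2$) is the detailed work. I would present one uniform construction covering both cases by choosing $w(1)$ appropriately ($w(1) = 1$ when $i \le n$, $w(1) = 2$ or larger when $i > n$) and then carefully check $\Lucky(w) = \{1, i\}$ using the parking process directly.
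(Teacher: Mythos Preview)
Your forward direction is correct and is exactly the paper's argument: $h=1$ by Corollary~\ref{lem:2n_never_lucky}, and if $i>n$ then $i$ is odd by Proposition~\ref{prop:if second lucky is big, then it's odd}.

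The converse, however, is not proved. You correctly identify that the work lies in exhibiting explicit Stirling permutations, but none of your attempts lands. Your first tries for $i\le n$ (starting $w=1\,1\,2\,2\,\cdots$) are abandoned because they produce only one lucky car; your induction via Theorem~\ref{thm:n to n+1 admissible} still requires a base construction you never supply; and your sketch for $i>n$ odd (take $w(1)=w(2)=2$, place the first $1$ at position $i$) is on the right track but is never written down as an actual word in $Q_n$, nor verified to have no extra lucky cars.

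The paper's move that you are missing is to split the construction by the \emph{parity of $i$}, not by whether $i\le n$. For $i=2k+1$ odd (regardless of size), the word
\[
2\,2\,3\,3\,\cdots\,(k{+}1)\,(k{+}1)\,1\,1\,(k{+}2)\,(k{+}2)\,\cdots\,n\,n
\]
places the first $1$ at position $i$ and has $\Lucky=\{1,i\}$; this is exactly the $w(1)=2$ idea you were circling. The genuinely new ingredient is the even case $i=2k\le n$, where the paper uses a non-obvious word that breaks the $k$-block apart:
\[
1\,1\,\cdots\,(k{-}1)\,(k{-}1)\,k\,(2k)\,(2k)\,(k{+}1)\,(k{+}1)\,\cdots\,(2k{-}1)\,(2k{-}1)\,(2k{+}1)\,(2k{+}1)\,\cdots\,n\,n\,k.
\]
Here car $i=2k$ prefers spot $2k$ and is lucky, while the trailing lone $k$ ensures the Stirling condition holds and no later car is lucky. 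Your proposal never approaches this construction, and nothing in your outline would discover it; this is the real gap.
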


\begin{proof}
    By Proposition~\ref{prop:if second lucky is big, then it's odd}, the sets described in the statement are the only possible lucky sets. For the other direction, we provide 
    Stirling permutations to demonstrate that all such $\{1,i\}$ can be obtained as lucky sets.  Let $w=112233 \cdots (n-1)(n-1)nn \in Q_n$. 
    \begin{itemize}
        \item Suppose $i$ is odd, i.e. $i=2k+1$. Let $\alpha$ be the Stirling permutation obtained from $w$ by  inserting $11$ in between $(k+1)(k+1)$ and $(k+2)(k+2)$, namely, 
        $$\alpha= 2\,2\,3\,3\,\cdots \,(k+1)\,(k+1)\,1\,1\,(k+2)\,(k+2)\,(k+3)\,(k+3)\,\cdots\, n\,n.$$
        Regardless of whether $i\leq n$ or $i>n$, one can verify that $\Lucky(\alpha) = \{1, i\}$.
        \item Suppose $i\leq n$ and $i$ is even, i.e. $i=2k$. Let $\alpha'$ be the Stirling permutation obtained from $w$ by inserting $k (2k)(2k)$ in between $(k-1)(k-1)$ and $(k+1)(k+1)$ and placing the remaining copy of $k$ at the end after $nn$, namely, 
        \begin{multline*}
        \alpha'=1\,1\,\cdots \,(k-1)\,(k-1) \, k \,(2k) \,(2k) \, (k+1)\,(k+1)\,\cdots \\ 
        \cdots \,(2k-1)\,(2k-1)(2k+1)\,(2k+1)\, \cdots\, n \,n \,k.         
        \end{multline*}
One can verify that $\Lucky(\alpha') = \{1, i\}$. \qedhere
    \end{itemize}
    \end{proof}

Enumeration of $n$-admissible lucky sets of size two follows from the above characterization. 

\begin{cor}
    For $n\geq 1$, there are 
    $\displaystyle{n-1+\left\lceil\frac {n-1}{2}\right\rceil}$ $n$-admissible lucky sets of size two.
\end{cor}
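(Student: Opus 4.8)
The plan is to count, for each valid value of the second lucky car $i$, whether the set $\{1,i\}$ contributes, and then sum. By Theorem~\ref{thm:exactly two lucky cars}, an $n$-admissible lucky set of size two must have the form $\{1,i\}$ with $h=1$ automatically forced, and the admissible values of $i$ are precisely: all $i$ with $2 \le i \le n$, together with all odd $i$ with $n < i \le 2n$. (Recall $i \le 2n$ since $S \subseteq [2n]$, and $i=2n$ is automatically excluded here because it is even once $n \ge 1$; more to the point it is ruled out by Corollary~\ref{lem:2n_never_lucky}, but the parity condition already handles it.)

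First I would count the values $i$ in the range $2 \le i \le n$: there are $n-1$ of them. Next I would count the odd integers $i$ with $n+1 \le i \le 2n$. The number of odd integers in an interval $[n+1, 2n]$ of length $n$ is $\lceil n/2 \rceil$ if $n+1$ is odd (i.e. $n$ even) and $\lfloor n/2 \rfloor$ if $n+1$ is even (i.e. $n$ odd); in both cases this equals $\lceil (n-1)/2 \rceil$, which one checks by splitting into the cases $n$ even and $n$ odd. (For $n$ even: the odd numbers in $[n+1,2n]$ are $n+1, n+3, \ldots, 2n-1$, of which there are $n/2 = \lceil (n-1)/2\rceil$. For $n$ odd: the odd numbers in $[n+1,2n]$ are $n+2, n+4, \ldots, 2n-1$, of which there are $(n-1)/2 = \lceil (n-1)/2\rceil$.) Adding the two disjoint counts gives $n-1 + \lceil (n-1)/2 \rceil$.

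I would then note that the two ranges $[2,n]$ and the odd integers in $[n+1,2n]$ are disjoint, so there is no double-counting, and that for $n = 1$ the formula correctly gives $0$ (there are no size-two admissible sets when there are only two cars, since $2 = 2n$ is never lucky). The only genuinely delicate point — and the main place to be careful — is the parity bookkeeping for the odd integers in $[n+1,2n]$, i.e. verifying that the count is uniformly $\lceil (n-1)/2\rceil$ regardless of the parity of $n$; everything else is immediate from Theorem~\ref{thm:exactly two lucky cars}. I would present this as a short two-case computation and conclude.
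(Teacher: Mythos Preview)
Your argument is correct and is precisely the computation the paper has in mind: the paper does not write out a proof but simply states that the enumeration follows from Theorem~\ref{thm:exactly two lucky cars}, and your count of the values $i\in[2,n]$ together with the odd $i\in[n+1,2n]$ is exactly how one cashes that out. The parity check showing the second count equals $\lceil (n-1)/2\rceil$ in both cases is the only nontrivial step, and you handle it cleanly.
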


Next, we conclude that if  $n$ is the second-smallest element in $\Lucky(w)$ for $w \in Q_n$, then $\Lucky(w) = \{1,n\}$. 

\begin{lem}\label{lem:cardinality_2_with_n}
    Let $S$ be an $n$-admissible lucky set with 
    second-smallest element $n$. 
    \begin{enumerate}
        \item If $n$ is even, then $w(1)=1, w(n)=n$, and $|S|=2$. 
        \item If $n$ is odd and $w(1)=1$, then $w(n)=n$ and $|S|=2$. 
    \end{enumerate}
\end{lem}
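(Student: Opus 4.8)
The plan is to leverage the structural lemmas already established, especially Lemma~\ref{lem:second_lucky}, Lemma~\ref{lem:first preference}, and Lemma~\ref{lem:stubborn stirling}, together with the parity result in Proposition~\ref{prop:if second lucky is big, then it's odd}. First I would observe that since the second-smallest element of $S=\Lucky(w)$ is exactly $n$, we are in the regime $x = n$ of the earlier lemmas. The key tension to exploit is this: by Lemma~\ref{lem:initial unlucky preferences are forced}, cars $2,3,\ldots,n-1$ park in spots $w(1)+1,\ldots,w(1)+n-2$, so after $n-1$ cars have parked, the occupied spots form the interval $[w(1),w(1)+n-2]$ (together with the single spot where car $1$ parked, if $w(1)=1$ this is just $[1,n-1]$; in general the occupied set is $\{p(1)\}\cup[w(1)+1,w(1)+n-2]$ where $p(1)$ is where car $1$ parked). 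For car $n$ to be lucky in spot $w(n)$, spot $w(n)$ must be unoccupied and must lie in $[1,n]$ by the definition of Stirling permutations.

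Now I would split into the two cases. For part (1), $n$ even: suppose first for contradiction that $w(1)>1$. Since car $1$ is lucky it parks in spot $w(1)$, and then cars $2,\ldots,n-1$ fill $[w(1)+1,w(1)+n-2]$, so the occupied set is $[w(1),w(1)+n-2]$, an interval of length $n-1$ ending at $w(1)+n-2\ge n$. The only free spots in $[1,n]$ are then $[1,w(1)-1]$, so car $n$ must prefer some $w(n)<w(1)\le w(1)$, forcing car $n$ to be the first car preferring a spot strictly smaller than all earlier preferences $w(1),\ldots,w(n-1)$ (here I'd invoke Remark~\ref{rk: values of w(i)} that $w(i)\ge w(1)$ for $i\in[2,n-1]$, and $w(1)>w(n)$). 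Then Lemma~\ref{lem:stubborn stirling} applied at $k=w(n)$, $j=n$ forces $n$ to be odd, contradicting $n$ even. Hence $w(1)=1$. With $w(1)=1$, Lemma~\ref{lem:second_lucky} gives $n\le w(n)\le n$, so $w(n)=n$. It remains to show $|S|=2$, i.e. no car after car $n$ is lucky: once $w(1)=1$ and $w(n)=n$, after $n$ cars park the occupied set is exactly $[1,n]$, so every remaining car prefers a spot in $[1,n]$ that is already taken — hence every subsequent car is unlucky, giving $S=\{1,n\}$.

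For part (2), $n$ odd with the hypothesis $w(1)=1$ handed to us: here the argument is the tail end of the previous case. Since $w(1)=1$, Lemma~\ref{lem:second_lucky} immediately yields $n\le w(n)\le n$, so $w(n)=n$; and then the same "occupied set is $[1,n]$ after the first $n$ cars'' argument shows no later car can be lucky, so $|S|=2$ and $S=\{1,n\}$. The reason part (2) needs the extra hypothesis $w(1)=1$ while part (1) does not is exactly that the parity obstruction from Lemma~\ref{lem:stubborn stirling} only kills the $w(1)>1$ scenario when $n$ is even; when $n$ is odd there genuinely exist $w\in Q_n$ with second-smallest lucky element $n$ and $w(1)>1$ (one could even flag this, e.g. the permutation realizing case (1) of Theorem~\ref{thm:exactly two lucky cars} style constructions), so no conclusion about $w(1)$ is available for free.

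The main obstacle I anticipate is being careful about the exact occupied-spot set when $w(1)>1$: one must track that car $1$ sits at spot $w(1)$ and cars $2,\ldots,n-1$ fill the consecutive block immediately to its right, so that the occupied set really is the clean interval $[w(1),w(1)+n-2]$ rather than something with a gap — this is what makes the deduction "the only free spot in $[1,n]$ lies below $w(1)$'' valid, and hence what lets Lemma~\ref{lem:stubborn stirling} apply. A secondary point requiring a line of care is verifying that $w(n)$ really is a \emph{first} appearance of its value (so that Lemma~\ref{lem:stubborn stirling}'s hypothesis "$j$ minimal with $w(j)=k$'' holds): if $w(n)$ were a second appearance, car $n$ would be unlucky by Remark~\ref{rem:2n never lucky}, contradicting $n\in\Lucky(w)$, so this is immediate but should be stated.
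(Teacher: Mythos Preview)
Your proposal is correct and follows essentially the same approach as the paper's proof: both argue by contradiction (via Lemma~\ref{lem:initial unlucky preferences are forced} and Lemma~\ref{lem:stubborn stirling}) that $w(1)>1$ forces $n$ odd, then deduce $w(n)=n$ and $|S|=2$ from the fact that the first $n-1$ cars fill $[1,n-1]$. The only cosmetic difference is that you invoke Lemma~\ref{lem:second_lucky} to pin down $w(n)=n$, whereas the paper re-derives that step directly from Lemma~\ref{lem:initial unlucky preferences are forced}; your extra remark that $w(n)$ must be a first appearance (needed for Lemma~\ref{lem:stubborn stirling}) is a nice point of care that the paper leaves implicit.
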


\begin{proof}
Let $w\in Q_n$ with $\Lucky(w)=S$ and such that $1$ and $n$ are the two smallest elements in $S$. 

We first show that for $n$ even, $w(1)=1$. Suppose that $n$ is even and assume, for the purpose of obtaining a contradiction, that $w(1) > 1$. Then, by Lemma~\ref{lem:initial unlucky preferences are forced}, cars 2 through $n-1$ park in spots $w(1)+1,w(1)+2,\ldots, w(1) + n - 2$ where $w(1) + n - 2 \ge n$. So, lucky car $n$ must prefer a spot in the set $ [1, w(1)-1]$. Moreover, $w(n)$ must be smaller than every element of $\{w(1), \ldots, w(n-1)\}$.
By Lemma~\ref{lem:stubborn stirling}, $n$ must be odd. This is a contradiction. Thus, if $n$ is even, then $w(1) = 1$. 

Now assume that $w(1) = 1$, and make no assumptions on the parity of $n$. By Lemma~\ref{lem:initial unlucky preferences are forced}, the first $n-1$ cars park in the first $n-1$ spots, so the only way car $n$ can be lucky is if it prefers to park in spot $n$; i.e., if $w(n) = n$. There can be no additional lucky cars because the first $n$ spots are occupied by the first $n$ cars. Thus, $|S| = 2$.
\end{proof}

\subsection{On admissible lucky sets of size three}\label{subsec:three lucky cars}

For $n=4$, Table~\ref{table:4-admissible sets}
 gives the 11 distinct $4$-admissible lucky sets of cardinality three. 
In this section, we characterize certain families of three-element admissible lucky sets depending on the parity of $n$. It remains an open problem to characterize all three-element admissible lucky sets.

\begin{prop}\label{prop:admissible n even}
The set $\{1,n-1,2n-2\}$ is an $n$-admissible lucky set if and only if $n$ is even.
\end{prop}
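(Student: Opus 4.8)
The plan is to prove both directions by a combination of the structural lemmas already established and an explicit construction. For the forward direction, suppose $S = \{1, n-1, 2n-2\}$ is $n$-admissible, witnessed by some $w \in Q_n$ with $\Lucky(w) = S$. Here the second-smallest element of $\Lucky(w)$ is $x = n-1$, and the third-smallest is $2n-2 > n$. I would first argue that $w(1) = 1$: if $w(1) > 1$, then by Lemma~\ref{lem:initial unlucky preferences are forced} cars $2, \ldots, n-2$ occupy spots $w(1)+1, \ldots, w(1)+n-3$, and to have a lucky car at position $2n-2$ (which lies in the second half, since $2n-2 > n$), Remark~\ref{rk: values of w(i)} together with Lemma~\ref{lem:stubborn stirling} forces that car to sit at an odd index; but $2n-2$ is even, a contradiction. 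So $w(1) = 1$. Then by Lemma~\ref{lem:initial unlucky preferences are forced} cars $2, \ldots, n-2$ park in spots $2, \ldots, n-2$, car $n-1$ is lucky so $w(n-1) = n-1$ (the only free spot $\le n$ among $\{n-1, n\}$ that keeps the count consistent — more precisely, after cars $1,\ldots,n-2$ the free spots among $[1,n]$ are just $\{n-1, n\}$, and for a third lucky car later we need one of these still free, so $w(n-1) = n-1$ and spot $n$ stays empty until a later car claims it). Now the third lucky car is car $2n-2$, sitting at an even index $2n-2$ in the second half of the queue; by Lemma~\ref{lem:stubborn stirling} the first car to prefer its spot must be at an odd index, so car $2n-2$ cannot be the first to prefer a spot no smaller than $w(1)=1$ unless its preferred spot is strictly less than $w(1)$ — impossible. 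Hence the only way car $2n-2$ is lucky is analogous to Theorem~\ref{thm:2n-1lucky}: I would show that the forced configuration requires $w(2n-2) = 1$, which means cars $2n-1$ and $2n$ each prefer the second appearance of some value, and the remaining spot $n$ is filled by the first available unlucky car among $n, n+1, \ldots, 2n-3$; tracking exactly when spot $n$ gets filled and the parity constraint that the number sitting at indices $n, \ldots, 2n-3$ occupies a Stirling block forces $n$ to be even. The cleanest route is: once $w(1)=w(2n-2)=w(2n-1)=w(2n)=1$ is pinned down, the value $1$ occupies indices $\{1, 2n-2\}$ — wait, that is impossible since the two $1$'s must be consecutive or the block condition fails. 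So instead I expect $w(2n-2) = 1$ to be forced with the \emph{second} $1$ at index $2n-1$, and the first $1$ at index $1$ is separated from it, which is only Stirling-legal if everything between is larger — that is the whole permutation, fine. The parity of $n$ then enters through counting: the cars at indices $2, \ldots, 2n-3$ use up the values $2, \ldots, n$ (each twice) minus whatever, and a pigeonhole/parity count on which index fills spot $n$ yields $n$ even.

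For the reverse direction, assuming $n$ is even, I would exhibit an explicit $w \in Q_n$ with $\Lucky(w) = \{1, n-1, 2n-2\}$. Guided by the constructions in Theorem~\ref{thm:exactly two lucky cars} and Example~\ref{ex: admissible 1}, a natural candidate is something like
$$w = 2\,2\,3\,3\,\cdots\,(n-1)\,(n-1)\,1\,n\,n\,1$$
or a variant where the first block is $1$ at positions to make car $n-1$ land on spot $n-1$, the two $n$'s are placed so that one of them fills spot $n$ at position $2n-2$, and the two $1$'s straddle appropriately. Concretely I would try: start the first $1$ at position $1$, let $3\,3\,4\,4\cdots$ fill so that car $n-1$ prefers and gets spot $n-1$, insert $n\,n$ right before the final $1$ so that the first $n$ lands at position $2n-2$ on the one remaining empty spot $n$, and close with the second $1$ at position $2n$. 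One then verifies directly that cars $1$, $n-1$, and $2n-2$ are exactly the lucky ones, and the verification that car $2n-2$ lands on spot $n$ requires that after $2n-3$ cars exactly spot $n$ is free, which is where $n$ being even makes the arithmetic work out (for $n$ odd the parity forces the analogous car to an odd index and the construction fails, consistent with the forward direction).

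The main obstacle I anticipate is the forward direction's parity extraction: showing cleanly that $2n-2 \in \Lucky(w)$ together with $1, n-1 \in \Lucky(w)$ forces $n$ even. The heart of it is an application of Lemma~\ref{lem:stubborn stirling} in the style of the proof of Theorem~\ref{thm:2n-1lucky}: the first car to prefer the spot that car $2n-2$ parks in must be at an odd index, and since car $2n-2$ is itself the first such car (being lucky), $2n-2$ would need to be odd — but it is even, so car $2n-2$'s preferred spot must be strictly less than $w(1)$, which combined with $w(1)=1$ is absurd \emph{unless} we are in the degenerate situation $w(2n-2)=1$ with the second copy of $1$ immediately following; then a careful count of how the remaining even-indexed cars tile spots $[1,n]$ pins down the parity of $n$. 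I would write this count explicitly, modeling it on Lemma~\ref{lem:cardinality_2_with_n}(1), where the same "$w(1)>1 \Rightarrow n$ odd" mechanism appears. Everything else — the $w(1)=1$ deduction, the $w(n-1)=n-1$ deduction, and the reverse construction — should be routine given the lemmas in Sections~\ref{sec:extreme cases} and~\ref{sec:admissible}.
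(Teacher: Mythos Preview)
Your forward direction goes astray at the very first step: you claim to force $w(1)=1$, but in fact $w(1)=1$ is \emph{impossible} for this lucky set, and the paper's argument is exactly to rule it out. Your deduction that $w(1)>1$ leads to a contradiction misapplies Lemma~\ref{lem:stubborn stirling}/Proposition~\ref{prop:if second lucky is big, then it's odd}: that parity argument needs every earlier car to have preference strictly larger than the lucky car's preference, which is guaranteed for the \emph{second} lucky car (by Remark~\ref{rk: values of w(i)}) but not for the \emph{third}. Concretely, when $w(1)>1$ the second lucky car $n-1$ may well prefer spot~$1$; then car $2n-2$ can prefer spot~$2$, which is not smaller than all earlier preferences, and Lemma~\ref{lem:stubborn stirling} says nothing. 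So there is no contradiction from $w(1)>1$, and indeed the paper's witness for even $n=2k$,
\[
3\,3\,4\,4\,\cdots\,(k+1)\,(k+1)\,1\,(k+2)\,(k+2)\,\cdots\,(2k)\,(2k)\,2\,2\,1,
\]
has $w(1)=3$.

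The correct logic is the reverse of yours. One shows that $w(1)=1$ cannot occur: if it did, Lemma~\ref{lem:initial unlucky preferences are forced} fills spots $1,\ldots,n-2$ with the first $n-2$ cars, forcing $w(n-1)\in\{n-1,n\}$; in either case (tracing where car $n$ and then car $n+1$ park) the spots $1,\ldots,n$ are exhausted before car $2n-2$ arrives, so $2n-2$ cannot be lucky. Hence $w(1)>1$. Now the first car to prefer spot~$1$ is lucky and, by Lemma~\ref{lem:stubborn stirling}, sits at an odd index; since $2n-2$ is even, that first car must be car $n-1$, so $n-1$ is odd and $n$ is even. Your proposed construction for the converse, which begins with $w(1)=1$, therefore cannot work either; you need a witness with $w(1)>1$ and $w(n-1)=1$, as above.
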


\begin{proof}
First suppose that $w \in Q_n$ with $\Lucky(w) = \{1, n-1, 2n-2\}$. Assume $n > 3$, with the result being easy to check in small cases. 

Recall that the first car to prefer spot $1$ is always lucky, and by Lemma~\ref{lem:stubborn stirling}, it must be an odd-indexed car. Since $2n-2$ is always even, either car $1$ or car $n-1$, with $n$ even, must be the first car to prefer spot $1$. 

If $w(1) = 1$, by Lemma~\ref{lem:initial unlucky preferences are forced}, the only possible spots for the lucky car $n-1$ to prefer are $n-1$ or $n$. Since car $n$ is unlucky, it must have the same preference as a previous car, or its preferred spot is occupied. In order to satisfy the Stirling condition, we have $w(n-1) = w(n)$.  If $w(n-1) = n-1$, then the first $n$ cars park in the first $n$ spots, meaning that car $2n-2$ cannot possibly be lucky. On the other hand, if $w(n-1) = n$, then the only available spot for lucky car $2n-2$ to park is spot $n-1$. However, the preference of car $n+1$ is at most $n-1$. So, it ends up parking in the next available spot $n-1$, preventing car $2n-2$ from being lucky. Thus,  $w(1) = 1$ is not possible, and car $n-1$, with $n$ even, is the first car to prefer spot $1$. In other words, $\{1, n-1, 2n-2\}$ is $n$-admissible implies $n$ is even.

To prove the other direction of the result, it suffices to produce a Stirling permutation $w \in Q_{2k}$ for which $\Lucky(w) = \{1, 2k-1, 4k-2\}$.
Such a permutation is given below:
\[3\,3\,4\,4\,\cdots\,k\,k\,(k+1)\,(k+1)\,1\,(k+2)\,(k+2)\,\cdots\,(2k)\,(2k)\,2\,2\,1 \in Q_{2k}.\qedhere\]
\end{proof}

\section{Displacement statistic}\label{sec:displacement}

In this section, we study the displacement statistics of Stirling permutations. As discussed in the introduction, this statistic measures how far cars park from their preferred parking spots in a parking function.  

\begin{defn}\label{def:displacement}
    For any $w\in Q_n$, if car $i$ 
    parks in spot $p(i)\in[2n]$, then the \textit{displacement} of car $i$ is $d(i)\coloneqq p(i)-w(i)$. 
    The \textit{(total)  displacement} of $w\in Q_n$ is
    \[\displaystyle{d(w)=\sum_{i=1}^{2n} d(i)}.\]
\end{defn}

Because every Stirling permutation is a parking function, the displacement of a car is always nonnegative. Moreover, by definition, cars with displacement equal to zero are lucky cars, while cars whose displacement is positive are unlucky.

We begin by recalling a result from  \cite{elder2023costsharing} that for any $w\in Q_n$, the displacement of $w$ is invariant under permutations of the entries of $w$, even though the resulting strings are not necessarily Stirling permutations. 
This result is a special case of \cite[Lemma 3.1]{elder2023costsharing}, hence we omit its proof but state it formally below.

\begin{lem}\label{lem:constant displacement}\cite[Lemma 3.1]{elder2023costsharing}
    For any $w \in Q_n$, the displacement $d(w)$ is invariant under permuting the numbers in the one-line notation of $w$.
   In other words, for any permutation $\sigma$ of $[1,2n]$ 
    and any Stirling permutation $w\in Q_n$, let $\sigma(w)=w(\sigma(1))w(\sigma(2))\cdots w(\sigma(2n))$. Then $d(\sigma(w))=d(w)$.
\end{lem}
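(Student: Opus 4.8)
The plan is to give a direct, self-contained argument based on the elementary observation that in \emph{any} parking function the total displacement depends only on the multiset of preferences.

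First I would record the key structural fact: if $\sigma$ is any permutation of $[1,2n]$ and $w\in Q_n$, then $\sigma(w)=w(\sigma(1))\cdots w(\sigma(2n))$ is still a parking function of length $2n$. Indeed, the weakly increasing rearrangement of $\sigma(w)$ is the same as that of $w$, namely $(1,1,2,2,\ldots,n,n)$, and this tuple $(b_1,\ldots,b_{2n})$ satisfies $b_i\le i$ for all $i$; hence $\sigma(w)\in\PF_{2n}$ and all $2n$ cars successfully park. In particular, the displacement $d(\sigma(w))$ is well-defined.

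Next I would establish the closed form for total displacement. For any parking function $\alpha=(a_1,\ldots,a_m)\in\PF_m$, all $m$ cars park, and since no two cars occupy the same spot and there are exactly $m$ spots, the map $i\mapsto p(i)$ sending car $i$ to its parking spot is a bijection from $[m]$ onto $[m]$. Therefore $\sum_{i=1}^m p(i)=\sum_{j=1}^m j=\binom{m+1}{2}$, which gives
\[
d(\alpha)=\sum_{i=1}^m\bigl(p(i)-a_i\bigr)=\binom{m+1}{2}-\sum_{i=1}^m a_i.
\]
Applying this with $m=2n$ to both $w$ and $\sigma(w)$, and noting that $\sum_{i=1}^{2n} w(\sigma(i))=\sum_{i=1}^{2n} w(i)$ because $\sigma$ merely permutes the summands, we conclude $d(\sigma(w))=\binom{2n+1}{2}-\sum_{i=1}^{2n}w(i)=d(w)$, which is exactly the claim.

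I do not anticipate a genuine obstacle here: the only point requiring care is the first step, confirming that every rearrangement of a Stirling permutation is again a parking function, and that is immediate from the fact that its sorted form is always $(1,1,2,2,\ldots,n,n)$. (One could alternatively invoke \cite[Lemma~3.1]{elder2023costsharing} verbatim, but the argument above is short enough to include directly.)
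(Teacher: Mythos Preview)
Your argument is correct. The paper does not actually prove this lemma: it cites \cite[Lemma~3.1]{elder2023costsharing} and explicitly omits the proof, so there is no ``paper proof'' to compare against beyond the external reference. Your self-contained argument via the identity $d(\alpha)=\binom{m+1}{2}-\sum_i a_i$ is a clean and standard way to see the invariance, and it has the advantage of being entirely elementary and not requiring the reader to consult another source; nothing is missing.
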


We now determine the displacement of Stirling permutations.

\begin{cor}\label{cor:displacement} 
For $w\in Q_n$, $d(w)=n^2$.   
\end{cor}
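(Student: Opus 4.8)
The plan is to use the invariance result from Lemma~\ref{lem:constant displacement} to replace $w$ by the most convenient rearrangement of the multiset $\{1,1,2,2,\ldots,n,n\}$, and then compute the total displacement directly. Since $d(w)$ does not depend on the order of the entries, I would pick the weakly increasing rearrangement $\sort{w} = 1\,1\,2\,2\,3\,3\,\cdots\,n\,n$, which is itself a parking function of length $2n$, and compute $d(\sort{w})$.

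For this rearrangement, the parking outcome is transparent: car $1$ (first $1$) parks in spot $1$, car $2$ (second $1$) parks in spot $2$, the first $2$ parks in spot $3$, the second $2$ parks in spot $4$, and in general the two cars preferring value $k$ park in spots $2k-1$ and $2k$. Concretely, the car in position $2k-1$ of $\sort{w}$ has preference $k$ and parks in spot $2k-1$, contributing displacement $(2k-1)-k = k-1$; the car in position $2k$ has preference $k$ and parks in spot $2k$, contributing displacement $2k - k = k$. Summing over $k \in [1,n]$ gives
\[
d(\sort{w}) = \sum_{k=1}^{n} \big((k-1) + k\big) = \sum_{k=1}^n (2k-1) = n^2.
\]
By Lemma~\ref{lem:constant displacement}, $d(w) = d(\sort{w}) = n^2$ for every $w \in Q_n$.

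There is essentially no obstacle here: the only thing to verify carefully is that in the sorted arrangement the cars do indeed land in consecutive spots $1, 2, \ldots, 2n$ with the $k$th value occupying spots $2k-1$ and $2k$, which is an immediate induction on the position (each prefix $1\,1\,\cdots\,k\,k$ of length $2k$ fills exactly spots $[1,2k]$, so the next car with preference $k+1$ finds spot $k+1$ already taken once $k+1 \le 2k$, i.e. for $k \ge 1$, and proceeds to the first free spot). The arithmetic $\sum_{k=1}^n(2k-1) = n^2$ is the standard odd-number summation. Thus the corollary follows immediately from the lemma together with this one explicit computation.
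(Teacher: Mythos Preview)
Your proof is correct and follows essentially the same approach as the paper's own proof: invoke Lemma~\ref{lem:constant displacement} to reduce to the sorted word $112233\cdots nn$, observe that car $i$ parks in spot $i$, and sum the resulting displacements to obtain $n^2$. The only difference is cosmetic---you group the sum by value $k$ to get $\sum_{k=1}^n (2k-1)$, while the paper writes out the displacement sequence $0+1+1+2+2+\cdots+(n-1)+(n-1)+n$ directly.
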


\begin{proof}
By Lemma~\ref{lem:constant displacement}, it suffices to compute $d(w)$ for the Stirling permutation $w=112233\cdots nn$. 
For this $w$, car $i$ parks in spot $i$, for all $i\in[2n]$. 
Thus, $d(w)=0+1+1+2+2+ \cdots+ (n-1)+(n-1)+n=n^2$, as desired.
\end{proof}

Definition~\ref{def:displacement} provides two versions of displacement: one for each car and one for the entire Stirling permutation. Corollary~\ref{cor:displacement} addresses the latter of these for Stirling permutations. Now we turn our attention to the former and investigate ``compositions" of $n^2$ that record the displacement data for each car when $w$ is in $Q_n$. 

\begin{defn}
    For $w\in Q_n$, we define the \textit{displacement composition} of $w$, denoted by $\disvec(w)$, as the $2n$-tuple encoding the displacements of each car: 
     \[\disvec(w)=(d(1),d(2),\ldots,d(2n))\in\mathbb{N}^{2n}.\]
\end{defn}

As discussed before, $d(i)=0$ if and only if car $i$ is lucky. In particular, the first entry of $\disvec(w)$ is always $0$.

\begin{ex} 
For $w = 11244233$, $\disvec(w) = (0,1,1,0,1,4,4,5)$.
\end{ex}

Recall that $1\leq \lucky(w)\leq n$ by Lemma~\ref{lem:lucky_range}(c). Thus, we can bound the number of nonzero entries in $\disvec(w)$ for any $w\in Q_n$.

\begin{cor}\label{cor:bounds for m_D}
For any $w\in Q_n$, we have that 
$1 \le |\{i \in [2n] : d(i) = 0\} | \le n$.
\end{cor}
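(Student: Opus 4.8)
The plan is to derive this immediately from Lemma~\ref{lem:lucky_range}(c) by identifying the set $\{i \in [2n] : d(i) = 0\}$ with the set of lucky cars. First I would recall the observation, already noted just before the statement, that a car $i$ has displacement $d(i) = 0$ if and only if car $i$ parks in its preferred spot, i.e.\ if and only if $i \in \Lucky(w)$. This is immediate from Definition~\ref{def:displacement}: $d(i) = p(i) - w(i) = 0$ exactly when $p(i) = w(i)$, which is precisely the condition that car $i$ is lucky in the sense of Definition~\ref{defn:lucky}. Consequently $\{i \in [2n] : d(i) = 0\} = \Lucky(w)$, and hence $|\{i \in [2n] : d(i) = 0\}| = \lucky(w)$.

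With this identification in hand, the bound is just a restatement of Lemma~\ref{lem:lucky_range}(c), which gives $1 \le \lucky(w) \le n$ for every $w \in Q_n$. The lower bound reflects that car $1$ is always lucky (so its displacement is $0$), and the upper bound reflects that every car's preference lies in $[1,n]$, so at most $n$ cars can occupy their preferred spots. I would simply cite Lemma~\ref{lem:lucky_range}(c) and substitute $\lucky(w) = |\{i \in [2n] : d(i) = 0\}|$ to conclude.

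There is no real obstacle here; the statement is a corollary in the literal sense, repackaging the lucky-car range in the language of the displacement composition. The only thing to be careful about is making the dictionary between ``lucky car'' and ``zero displacement'' explicit before invoking the earlier lemma, so that the proof is self-contained within the displacement section. Thus the entire proof is: observe $d(i) = 0 \iff i \in \Lucky(w)$, so the quantity in question equals $\lucky(w)$, and apply Lemma~\ref{lem:lucky_range}(c).
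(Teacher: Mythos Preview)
Your proposal is correct and matches the paper's approach exactly: the paper derives this corollary directly from Lemma~\ref{lem:lucky_range}(c) via the observation (stated just before the corollary) that $d(i)=0$ precisely when car $i$ is lucky. Your write-up simply makes this identification explicit before citing the lemma, which is all that is needed.
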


Two distinct Stirling permutations may share the same displacement composition, indicating that displacement compositions do not uniquely identify a Stirling permutation. For instance, the displacement composition 
$(0, 1, 0, 1, 3, 4)$ corresponds to both 
$113322$ and $331122$. 
However, 
extremely lucky Stirling permutations are uniquely determined by their displacement compositions. 
That is,
no two distinct extremely lucky Stirling permutations have the same displacement composition. This is the main result in this section, and we start with a technical lemma used in the proof of that result.

\begin{lem}\label{lem:unlucky preference and displacement}
    Let $w \in Q_n$ be an extremely lucky permutation.  For $i\in[n]$:
    \begin{enumerate}
        \item[(a)] The $i$th unlucky car of $w$ prefers spot $n - i +1.$ 
        \item[(b)] The $i$th unlucky car of $w$ has displacement $2i-1.$
    \end{enumerate}
\end{lem}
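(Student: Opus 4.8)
The plan is to prove both statements simultaneously by induction on $i$, using the structural characterization of extremely lucky permutations from Proposition~\ref{prop:extremely lucky second cars in order}: the second appearances of $1, 2, \ldots, n$ occur in the order $n, n-1, \ldots, 1$ from left to right. The key observation is that in an extremely lucky $w$, every car whose preference is a \emph{first} appearance of some value parks in its preferred spot (these are exactly the $n$ lucky cars, parking in spots $1, \ldots, n$), while the unlucky cars are precisely those preferring a \emph{second} appearance of a value. So I first want to identify, for each $i \in [n]$, which value's second appearance corresponds to the $i$th unlucky car (reading left to right).

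Here is the mechanism. By Proposition~\ref{prop:extremely lucky second cars in order}, the second appearances, read left to right, spell out $n, n-1, \ldots, 1$. Hence the $i$th unlucky car encountered (reading $w$ left to right) is the car whose preference is the second appearance of the value $n - i + 1$. That immediately gives part (a): the $i$th unlucky car prefers spot $n - i + 1$. For part (b), I track which spots are occupied when the $i$th unlucky car tries to park. At that moment, the first appearances of $1, \ldots, n$ that lie to its left have all parked in their preferred spots; in particular, since the first appearance of any value $v$ precedes the second appearance of $v$, all of spots $1, 2, \ldots, n$ are... more carefully: I claim spots $n-i+1, n-i+2, \ldots, n$ are occupied by lucky cars, and spots $n+1, \ldots, n+i-1$ are occupied by the first $i-1$ unlucky cars. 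The first claim holds because the first appearances of $n-i+1, \ldots, n$ all occur before the second appearance of $n-i+1$ (each value's first appearance precedes its own second appearance, and the second appearances are in decreasing order). The second claim follows by the inductive hypothesis: the $j$th unlucky car, for $j < i$, has displacement $2j - 1$ and preference $n - j + 1$, so it parks in spot $n - j + 1 + (2j - 1) = n + j$; letting $j$ range over $1, \ldots, i-1$ fills spots $n+1, \ldots, n+i-1$. Therefore when the $i$th unlucky car, preferring spot $n-i+1$, drives forward, it finds spots $n-i+1$ through $n+i-1$ all taken and parks in spot $n+i$, giving displacement $(n+i) - (n-i+1) = 2i - 1$, which proves (b) and closes the induction.

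The base case $i = 1$ is a sanity check: the first unlucky car prefers the second appearance of $n$, i.e. spot $n$; spots $1, \ldots, n$ are occupied by the $n$ lucky cars (actually only spot $n$ and to its right matter—spot $n$ is occupied by the lucky car preferring the first appearance of $n$), so the first unlucky car parks in spot $n+1$, displacement $1 = 2(1)-1$.

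The main obstacle I anticipate is carefully justifying the occupancy claim—specifically, that at the instant the $i$th unlucky car attempts to park, spots $n-i+1, \ldots, n$ are all already filled by lucky cars and spots $n+1, \ldots, n+i-1$ are filled by the earlier unlucky cars, with \emph{nothing} in spots $\ge n+i$ yet and possibly some unfilled spots in $[1, n-i]$. This requires knowing that no lucky car parks to the right of the $i$th unlucky car before it parks, and that the earlier unlucky cars parked exactly where the inductive hypothesis says (rather than being bumped further). The former follows because a lucky car to the right in the word parks strictly later in time, and the latter is exactly the inductive hypothesis combined with the fact that those cars, being lucky-adjacent in structure, don't interfere with each other—each subsequent unlucky car's target spot $n-j+1$ is strictly decreasing while the spots they actually occupy, $n+j$, are strictly increasing, so there is no collision forcing extra displacement. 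Making this ordering argument airtight (perhaps by explicitly noting that the relative left-to-right order of the $2n$ cars is the order in which they attempt to park) is the one place to be careful; everything else is bookkeeping.
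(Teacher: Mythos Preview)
Your proposal is correct and takes essentially the same approach as the paper: part (a) follows immediately from Proposition~\ref{prop:extremely lucky second cars in order}, and part (b) is proved by induction on $i$, using the inductive hypothesis to conclude that the first $i-1$ unlucky cars occupy spots $n+1,\ldots,n+i-1$ while the relevant lucky cars occupy spots $n-i+1,\ldots,n$, so the $i$th unlucky car lands in spot $n+i$. Your write-up is, if anything, more explicit than the paper's about why spots $[n-i+1,n]$ are already filled when the $i$th unlucky car arrives.
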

\begin{proof}
    Let $w \in Q_n$ be an extremely lucky permutation. 
    Hence, $\lucky(w)=n$ and there are $n$ unlucky cars. 
    \begin{enumerate}
    \item[(a)] This is an immediate consequence of Proposition~\ref{prop:extremely lucky second cars in order}.

    \item[(b)] We proceed by induction. Suppose that car $j$ is the first unlucky car. It was established in (a) that car $j$ must prefer spot $n$. Since car $j$ is the second car to prefer spot $n$, and no other cars have yet been unlucky, then car $j$ parks in spot $n+1$. Thus, its displacement is $(n+1) - n = 2(1)-1$, as claimed.

    Now fix $i \ge 1$ and assume that the result holds for all $j \le i$. Thus, the $j$th unlucky car parks in spot $(n-j+1)+(2j-1) = n+j$ for all $j\le i$.
    Let car $k$ be the $(i+1)$th unlucky car.
    By (a), this car $k$ must prefer spot $n-i$.
    Since car $k$ is unlucky, 
    and by Proposition~\ref{prop:extremely lucky second cars in order}, we know that spots in $[n-i, n+i]$ are already occupied and the next available spot is $n+i+1$. 
    Thus, car $k$ parks in spot $n+i+1$. Hence, its displacement is $(n+i+1) - (n-i) = 2i+1 = 2(i+1)-1$, completing the proof. \qedhere
    \end{enumerate}
\end{proof}

We are now ready to establish the injectivity of the map $w \mapsto \disvec(w)$ where $w$ belongs to the set of extremely lucky Stirling permutations of order $n$.

\begin{thm}\label{thm:displacement_implies_unique} 
Let $w$ be an extremely lucky Stirling permutation. The displacement composition of $w$, $\disvec(w)$, uniquely determines  $w$. 
\end{thm}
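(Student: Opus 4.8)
The plan is to show that an extremely lucky Stirling permutation $w\in Q_n$ can be reconstructed from its displacement composition $\disvec(w)$ by recovering, in order, the positions of the unlucky cars and then the positions of the lucky cars. First I would use Lemma~\ref{lem:unlucky preference and displacement}(b): the $i$th unlucky car has displacement exactly $2i-1$, so these displacements are the distinct odd numbers $1,3,\ldots,2n-1$. Reading off $\disvec(w)$ from left to right, the positions carrying a nonzero entry are exactly the positions of the unlucky cars, and the specific odd value $2i-1$ appearing in such a position tells us that this is the $i$th unlucky car. By Lemma~\ref{lem:unlucky preference and displacement}(a), the $i$th unlucky car prefers spot $n-i+1$; hence for each position $j$ with $d(j)=2i-1$ we must have $w(j)=n-i+1$. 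So the subword of $w$ at the nonzero-displacement positions is completely forced by $\disvec(w)$.

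Next I would handle the lucky cars, i.e.\ the positions $j$ with $d(j)=0$. There are exactly $n$ of these (since $\lucky(w)=n$), and each is the \emph{first} occurrence of some value in $[n]$. By Proposition~\ref{prop:extremely lucky second cars in order}, the second occurrences of the values $n,n-1,\ldots,1$ appear in that (decreasing) order; equivalently, since the second-occurrence positions are precisely the nonzero-displacement positions and we have already determined which value sits at each of them, the value at each lucky position is forced by the Stirling condition: the first occurrence of a value $v$ must be the unique position $j$ with $d(j)=0$ lying to the left of the (already known) position of the second occurrence of $v$, and to the right of the second occurrences of all values greater than $v$. Concretely, scanning the nonzero positions from left to right they carry the values $n, n-1, \ldots, 1$ in decreasing order (this is exactly Proposition~\ref{prop:extremely lucky second cars in order} again), so reading $w$ left to right, each zero-displacement slot receives the largest value in $[n]$ whose second occurrence has not yet been passed and whose first occurrence has not yet been placed. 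This determines the word $w$ entirely.

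To package this cleanly I would argue by contradiction: suppose $w,w'\in Q_n$ are both extremely lucky with $\disvec(w)=\disvec(w')$. The two permutations then have unlucky cars in the same positions with the same displacements, so by Lemma~\ref{lem:unlucky preference and displacement} they agree at every nonzero-displacement position (both value and preference). They also have lucky cars in the same positions, and since each lucky car's value is forced by the Stirling condition together with the positions and values of the second occurrences — which coincide for $w$ and $w'$ — they agree at every zero-displacement position as well. Hence $w=w'$.

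The main obstacle is making the reconstruction of the lucky (first-occurrence) positions rigorous rather than hand-wavy: one must verify that knowing the positions and values of all \emph{second} occurrences of $1,\ldots,n$, plus knowing which remaining positions are first occurrences, pins down \emph{which} value goes in each first-occurrence slot. This is where Proposition~\ref{prop:extremely lucky second cars in order} does the real work — it forces the second occurrences into decreasing order $n\cdots 321$, and the Stirling bracketing condition then matches each opening to its (leftmost legal) closing, giving a unique interleaving. I would therefore state and use a small observation: in any Stirling permutation, once the positions of all second occurrences and the multiset of first-occurrence positions are fixed, the full word is determined by the requirement that between the two copies of $v$ only values larger than $v$ appear. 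Everything else is bookkeeping on $\disvec(w)$.
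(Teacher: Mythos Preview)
Your overall strategy matches the paper's: determine the values at the unlucky (nonzero-displacement) positions first, then use the Stirling condition to pin down the lucky (first-occurrence) positions. The paper does not invoke Lemma~\ref{lem:unlucky preference and displacement} at all --- it observes directly from Proposition~\ref{prop:extremely lucky second cars in order} that the nonzero positions, read left to right, must carry $n, n-1, \ldots, 1$ --- but your route via the displacement values $1,3,\ldots,2n-1$ reaches the same conclusion and is fine.

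However, both of your explicit reconstruction rules for the first occurrences are wrong. Take $w=123321\in Q_3$, with $\disvec(w)=(0,0,0,1,3,5)$. Your first rule says the first occurrence of $v$ lies to the right of the second occurrences of all values greater than $v$; but the first occurrence of $1$ is at position $1$, to the \emph{left} of the second occurrences of both $2$ and $3$. Your left-to-right scanning rule says position $1$ should receive the largest value whose second occurrence has not yet been passed, namely $3$; but $w(1)=1$. The paper's reconstruction runs in the opposite direction: process values from $n$ down to $1$, and place the first occurrence of $v$ at the \emph{rightmost} still-unclaimed zero-displacement position to the left of the (already placed) second occurrence of $v$. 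This is forced by the Stirling condition and is exactly parenthesis matching.

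Your ``small observation'' at the end --- that the positions of all second occurrences together with the set of first-occurrence positions determine the Stirling permutation --- is correct and is the real content of the argument. If you replace the two faulty explicit rules with either the paper's value-by-value rule above or simply an appeal to the parenthesis-matching bijection of Theorem~\ref{thm:extremely lucky bijection with parentheses}, the proof goes through.
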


\begin{proof}
Suppose that $w \in Q_n$ is an extremely lucky Stirling permutation, and consider its displacement composition $\disvec(w)$. Suppose that there exists another $w' \in Q_n$ such that $\disvec(w') = \disvec(w)$. 

The $0$ entries in a displacement composition correspond to the lucky cars in the parking function, so we must have $\Lucky(w) = \Lucky(w')$, and hence $w'$ must be extremely lucky as well. By Proposition~\ref{prop:extremely lucky second cars in order}, the values of $w'$ in each of the positions of nonzero entries of $\disvec(w)$ must be $n, n-1, \ldots, 1$, in order from left to right. Moreover, these must be the second appearances of each of these numbers in the one-line notation of $w'$.

Now, to obey the Stirling property, the first occurrence of $n$ in $w'$ must be in the rightmost unclaimed (lucky) position appearing to the left of the (already determined) second appearance of $n$ in $w'$. Similarly, the first occurrence of $n-1$ in $w'$ must be in the rightmost unclaimed position appearing to the left of the second appearance of $n-1$, and so on.

Each of these positions is entirely determined by $\disvec(w)$, and so in fact we must have $w = w'$.
\end{proof}

We demonstrate the result above in the following example. 
\begin{ex}
    The displacement composition $(0, 0, 0, 1, 3, 0, 0, 5, 7, 9)$ for $w \in Q_5$ tells us that $\Lucky(w) = \{1,2,3,6,7\}$. Moreover, Proposition~\ref{prop:extremely lucky second cars in order} tells us that
    $$ w = \underline{\ \phantom{1} \ } \ \underline{\ \phantom{1} \ } \ \underline{\ \phantom{1} \ } \ \ 5 \ \  4 \ \ \underline{\ \phantom{1} \ } \ \underline{\ \phantom{1} \ } \ \ 3 \ \ 2 \ \ 1.$$
    The Stirling condition tells us that the first appearance of $5$ in $w$ must be in position $3$. Then we see that the first appearance of $4$ in $w$ must be in position $2$. Similarly, the first position of $3$ in $w$ must be $w(7)$, the first position of $2$ must be $w(6)$, and the first position of $1$ must be $w(1)$. Therefore,
    $$w = \underline{\ 1 \ } \ \underline{\ 4 \ } \ \underline{\ 5 \ } \ \ 5 \ \ 4 \ \ \underline{\ 2 \ } \ \underline{\ 3 \ } \ \ 3 \ \ 2 \ \ 1.
    $$
\end{ex}

To conclude this section, we give a restatement of Corollary~\ref{cor:extremely lucky are catalan} in terms of displacement compositions.

\begin{cor}\label{cor:displace_num}
     The number of Stirling permutations whose displacement composition has exactly $n$ nonzero parts is $C_n$, 
    the $n$th Catalan number.
\end{cor}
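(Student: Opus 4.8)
\textbf{Proof proposal for Corollary~\ref{cor:displace_num}.}

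The plan is to reduce this statement directly to Corollary~\ref{cor:extremely lucky are catalan} by showing that a Stirling permutation $w \in Q_n$ has a displacement composition with exactly $n$ nonzero parts if and only if $w$ is extremely lucky. First I would observe that $\disvec(w) = (d(1), \ldots, d(2n))$ has exactly $k$ \emph{zero} parts precisely when $\lucky(w) = k$, since $d(i) = 0$ if and only if car $i$ is lucky (as noted right after the definition of $\disvec$). Consequently, $\disvec(w)$ has exactly $n$ nonzero parts if and only if it has exactly $2n - n = n$ zero parts, which happens if and only if $\lucky(w) = n$, i.e.\ if and only if $w$ is extremely lucky by Definition~\ref{defn:lucky stirling}.

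Having made this equivalence, I would then simply invoke Corollary~\ref{cor:extremely lucky are catalan}, which states that the number of extremely lucky Stirling permutations of order $n$ is the Catalan number $C_n$. Combining the two gives that the number of Stirling permutations in $Q_n$ whose displacement composition has exactly $n$ nonzero parts equals $C_n$, as claimed.

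This proof is essentially a one-line bookkeeping argument once the correspondence between ``number of nonzero parts of $\disvec(w)$'' and ``number of unlucky cars'' is spelled out, so there is no genuine obstacle. The only thing to be careful about is the off-by-$n$ accounting: $\disvec(w)$ always has exactly $2n$ entries (one per car), the number of zero entries is $\lucky(w)$, and hence the number of nonzero entries is $2n - \lucky(w)$; requiring this to be $n$ forces $\lucky(w) = n$, which is the maximum possible value by Lemma~\ref{lem:lucky_range}(c). One could optionally remark that this also shows $\disvec(w)$ can have \emph{at most} $2n - 1$ nonzero parts (since car $1$ is always lucky), mirroring Corollary~\ref{cor:bounds for m_D}, but that is not needed for the statement.
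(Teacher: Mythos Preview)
Your proposal is correct and matches the paper's approach exactly: the paper presents this corollary simply as ``a restatement of Corollary~\ref{cor:extremely lucky are catalan} in terms of displacement compositions,'' with no separate proof given. Your explicit spelling-out of the equivalence between ``$\disvec(w)$ has exactly $n$ nonzero parts'' and ``$\lucky(w)=n$'' is precisely the bookkeeping the paper leaves implicit.
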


\section{Open Problems}\label{sec:open problems}

Finally, we propose a selection of open problems related to lucky cars and the displacement statistic on Stirling permutations.

To begin with, recall that in Section~\ref{sec:small cardinality}, we gave several properties of three-element admissible lucky sets. The following problems remain unanswered. 

\begin{problem}
    Characterize and enumerate the three-element $n$-admissible lucky sets.

    More generally, for fixed $n$, how many $n$-admissible lucky sets are there?
\end{problem}

Equation~\eqref{eq:GesselSeo} gives a generating function for the lucky statistic on parking functions. 
Table~\ref{table:gen fun} provides analogous data for the generating function defined by the lucky statistic on Stirling permutations defined by
\[T_n(q)=\sum_{w\in Q_n}q^{\lucky(w)}.\]
Corollary~\ref{cor:extremely lucky are catalan}
implies that, for all $n\geq 1$, the leading coefficient of $T_n(q)$ is the Catalan number $C_n$. Theorem~\ref{thm:extremely unlucky} shows that the coefficient of $q$ in $T_n(q)$ is given by $(n-1)!$.
We thus pose the following questions.

\begin{problem}
    Determine a formula for $T_n(q)$ for any $n\geq 1$.
        Are the polynomials $T_n(q)$ unimodal? 
        Are they real rooted?

\end{problem}

\begin{table}[htbp]
{\renewcommand{\arraystretch}{1.5}
\centering
\begin{tabular}{|c||r|} \hline
\ \ $n$ \ \ & \multicolumn{1}{|c|}{$T_n(q)$}\\\hline
\hline
2 & $2q^2+q$ \\\hline
3 & $5q^3 + 8q^2 + 2q$ \\\hline
4 & $14q^4 + 49q^3 + 36q^2 + 6q$\\\hline
5 & $42q^5 + 268q^4 + 417q^3 + 194q^2 + 24q$\\\hline
6 & $132q^6 + 1374q^5 + 3876q^4 + 3665q^3 + 1228q^2 + 120q$\\\hline
7 & $429q^7 + 6752q^6 + 31231q^5 + 52353q^4 + 34675q^3 + 8975q^2 + 720q$\\\hline
8 & $1430q^8 + 32197q^7 + 227207q^6 + 620357q^5 + 710425q^4 + 355945q^3 + 74424q^2 + 5040q$\\\hline
\end{tabular}
}
\caption{The polynomials $T_n(q)$ for $2\leq n\leq 8$.}\label{table:gen fun}
\end{table}

The bound determined in Corollary~\ref{cor:bounds for m_D}, suggests the following avenue of research.

\begin{problem}\label{problem:existance}
Determine if, for $i\in[n,2n-1]$, there exists $w\in Q_n$ such that $\disvec(w)$ has exactly $i$ nonzero entries. Equivalently, can we always find a Stirling permutation with $i$ unlucky cars?
\end{problem}

Table~\ref{tab:displacement comps} provides some data in the case where $n=3$, listing all possible displacement compositions and the Stirling permutations that give rise to them.

It is shown in Theorem~\ref{thm:extremely unlucky} that there are $(n-1)!$ Stirling permutations in $Q_n$ whose displacement composition has exactly $1$ zero entry. Moreover, by Corollary~\ref{cor:displace_num}, there are $C_n$ Stirling permutations in $Q_n$ whose displacement composition has exactly $n$ nonzero parts. As a follow-up question to Problem~\ref{problem:existance}, we can thus try to interpolate between these two extreme cases,  enumerating the Stirling permutations whose displacement compositions have $k$ zero parts.

\begin{problem}
    For $k \in [2, n-1]$, determine
    $$\big| \{w \in Q_n: \disvec(w) \text{ has $k$ zero parts}\} \big|.$$
    Equivalently, how many $w \in Q_n$ have exactly $k$ lucky cars?
\end{problem}

It is also of interest to understand displacement compositions arising from general Stirling permutations.  

\begin{problem}\label{problem:displacement comps}
Let 
$\mathcal{D}_n\coloneqq\{\disvec(w):w\in Q_n\}$.
    Characterize and enumerate the elements of $\mathcal{D}_n$.
\end{problem}

On a more specific level, one can try to understand the fibers of the map $w \mapsto \disvec(w)$. In that direction, a related problem is to determine the number of Stirling permutations with a fixed displacement composition.
\begin{problem}
 Fix $\textbf{m} \in \mathcal{D}_n$. 
Characterize and enumerate the elements of the set
\[\mathcal{S}_n({\textbf{m}})\coloneqq\{w\in Q_n: \disvec(w)=\textbf{m}\}.\]
Put another way, what properties define the collection of Stirling permutations whose displacement compositions are equal to a given vector? Also, are there operations on Stirling permutations whose orbits define the fibers of that map?
\end{problem}

\begin{table}[htbp]
{\renewcommand{\arraystretch}{1.5}
    \centering
    \begin{tabular}{|c||c|}
\hline 
& $\disvec(w)$, where $w\in Q_3$\\[-.05in]
\ \ $k$ \ \ & has $k$ nonzero parts \\ \hline \hline
\multirow{5}*{$3$} &  $\disvec(123321)=(0,0,0,1,3,5)$\\
 & $\disvec(133221)=(0,0,1,0,3,5)$\\ 
&$\disvec(233211)=(0,0,1,3,0,5)$\\
&$\disvec(331221)=(0,1,0,0,3,5)$\\
&$\disvec(332211)=(0,1,0,3,0,5)$\\\hline
\multirow{8}*{$4$} 
&$\disvec(122331)=(0,0,1,1,2,5)$\\
&$\disvec(122133)=(0,0,1,3,2,3)$\\
&$\disvec(133122)=(0,0,1,1,3,4)$\\
&$\disvec(221331)=(0,1,0,1,2,5)$\\
&$\disvec(113322)=(0,1,0,1,3,4)$\\
&$\disvec(331122)=(0,1,0,1,3,4)$\\
&$\disvec(221133)=(0,1,0,3,2,3)$\\
&$\disvec(223311)=(0,1,1,2,0,5)$\\\hline
\multirow{2}*{$5$} 
&$\disvec(112233)=(0,1,1,2,2,3)$\\
&$\disvec(112332)=(0,1,1,1,2,4)$\\\hline
\end{tabular}
}
 \caption{Displacement compositions arising from Stirling permutations in $Q_3$, aggregated by the number of nonzero parts; i.e., by the number of unlucky cars.}
    \label{tab:displacement comps}
\end{table}

\bibliographystyle{plain}
\bibliography{bibliography}

\end{document}